\newtheorem{theorem}{Theorem}[section]
\newtheorem{lemma}{Lemma}[section]
\newtheorem{proposition}{Proposition}[section]
\newtheorem{conjecture}{Conjecture}[section]
\newtheorem{algorithm}{Algorithm}[section]
\theoremstyle{definition}
\newtheorem{definition}{Definition}[section]
\newtheorem{example}{Example}[section]
\theoremstyle{remark}
\newtheorem{remark}{Remark}[section]
\numberwithin{equation}{section}
\renewenvironment{proof}{{\noindent \bfseries Proof.}}{\qed \\}
\begin{document}

\title[Di Francesco--Itzykson--G\"ottsche Conjectures for Node Polynomials]{The Di Francesco--Itzykson--G\"ottsche\\Conjectures for Node Polynomials of $\mathbb{P}^{2}$}

\author{Nikolay Qviller}
\address{Centre of Mathematics for Applications, University of Oslo, P.O. Box 1053 Blindern, NO-0316 Oslo, NORWAY}
\email{nikolayq@cma.uio.no}

\subjclass[2000]{Primary: 14N10. Secondary: 14H50, 14Q05, 05A10.}

\keywords{Enumerative geometry, nodal curves, projective plane, Bell polynomials, enumerative combinatorics, node polynomials}

\date{\today}

\begin{abstract}
For a smooth, irreducible projective surface $S$ over $\mathbb{C},$ the number of $r$-nodal curves in an ample linear system $|\mathscr{L}|$ (where $\mathscr{L}$ is a line bundle on $S$) can be expressed using the $r$th Bell polynomial $P_{r}$ in universal functions $a_{i}, 1 \leq i \leq r,$ of $(S,\mathscr{L}),$ which are $\mathbb{Z}$-linear polynomials in the four Chern numbers of $S$ and $\mathscr{L}.$ We use this result to establish a proof of the classical shape conjectures of Di Francesco--Itzykson and G\"ottsche governing node polynomials in the case of $\mathbb{P}^{2}.$ We also give a recursive procedure which provides the $\mathscr{L}^{2}$-term of the polynomials $a_{i}.$
\end{abstract}

\maketitle

\tableofcontents

\section{Introduction: Known results and goals}
A well-known problem in algebraic geometry concerns the enumeration of curves in a given linear system on a surface $S,$ subject to specific singularity conditions. The classical example concerns curves of given degree $d$ in the complex projective plane $\mathbb{P}^{2}.$ The simplest singular curves have only nodal (or $A_{1}$) singularities; we refer to them as \textit{nodal curves}. Curves with exactly $r$ nodes and no other singularities are called $r$-nodal.

In the case of $\mathbb{P}^{2},$ the variety of curves of degree $d \geq 1$ corresponds naturally to the projective space $\mathbb{P}^{N}, N := \frac{d(d+3)}{2},$ describing homogeneous polynomials in $x,y,z$ of degree $d.$ For $r \leq {d \choose 2},$ the subset of $r$-nodal curves is an $r$-codimensional subvariety of this $\mathbb{P}^{N}$ (see for instance \cite[\S 1]{Ful}). The degree of this variety is equal to the (finite) number of $r$-nodal curves passing through $N-r$ points in general position in $\mathbb{P}^{2},$ as each such point condition represents a generic hyperplane in the parameter space $\mathbb{P}^{N}$ (cf. \cite[Lemma 4.7]{KP1}). Let $N_{r}(d)$ denote this number. More generally, consider any smooth, projective, (irreducible) surface $S$ equipped with some line bundle $\mathscr{L},$ and let $N_{r}(S,\mathscr{L})$ denote the number of $r$-nodal curves on $S$ which lie in the linear system $|\mathscr{L}|$ and pass through $N-r$ points in general position on $S$ (where $N := \textnormal{dim }|\mathscr{L}|$). Thus, $N_{r}(d)$ corresponds to $N_{r}(\mathbb{P}^{2},\mathscr{O}_{\mathbb{P}^{2}}(d)).$

There has recently been substantial progress in the understanding of these numbers. Consider the four Chern numbers of $(S,\mathscr{L}),$ namely $\partial := \mathscr{L}^{2}, k := \mathscr{L}\mathscr{K}_{S}, s := \mathscr{K}_{S}^{2}$ and $x := c_{2}(S),$ with $\mathscr{K}_{S}$ the canonical sheaf on $S.$ In \cite[Conjecture 2.1]{Got}, G\"ottsche conjectured the existence of universal, rational polynomials $Z_{r}$ in four variables, with $\textnormal{deg }Z_{r} = r$ for all $r \geq 0,$ such that $N_{r}(S,\mathscr{L}) = Z_{r}(\partial,k,s,x)$ provided $\mathscr{L}$ is $(5r-1)$-ample (we will define this notion later). The conjecture was proved by Tzeng \cite[Theorem 1.1]{Tzeng}, and in shorter terms by Kool, Shende and Thomas \cite{KST}. They also refined the result, showing that it is sufficient for $\mathscr{L}$ to be $r$-ample. Since the polynomials $Z_{r}$ enumerate nodal curves, they are called \textit{node polynomials}, in accordance with the terminology used by Kleiman--Piene in \cite{KP1,KP2}. Note that if $S=\mathbb{P}^{2}$ and $\mathscr{L} = \mathscr{O}_{\mathbb{P}^{2}}(d),$ the four Chern numbers become $d^{2},-3d,$ 9 and 3, so the node polynomials become polynomials of degree $2r$ in $d.$

It is convenient to study the generating function of the node polynomials $Z_{r}.$ That is, given $S$ and $\mathscr{L},$ consider the formal power series $\sum_{r \geq 0}Z_{r}(\partial,k,s,x)q^{r}.$ A consequence of Tzeng's proof of \cite[Conjecture 2.1]{Got} is G\"ottsche's \cite[Proposition 2.3]{Got}, the existence of four universal power series $A_{i}(q)$ with rational coefficients such that
\begin{equation}
\label{gen_function}
\sum_{r \geq 0}Z_{r}(\partial,k,s,x)q^{r} = A_{1}(q)^{\partial}A_{2}(q)^{k}A_{3}(q)^{s}A_{4}(q)^{x}.
\end{equation}

Here, we show that this proposition implies (together with some geometric input, cf. Lemma \ref{geom_input}) the existence of universal $\mathbb{Z}$-linear polynomials $a_{i}, i \geq 1,$ in four variables, such that $r!Z_{r}$ is equal to the $r$th \textit{Bell polynomial} (known from combinatorics) in the first $r$ polynomials $a_{i}.$ This clarifies some of the structure of the (huge) node polynomials $Z_{r}.$ Using some power series manipulations we obtain a recursive formula for the coefficient of $\partial$ in $a_{i}(\partial,k,s,x),$ and prove that it is divisible by 3. Also, analysing the Bell polynomial structure further yields a proof of another conjecture of G\"ottsche (see \cite[Remark 4.2]{Got}) on the shape of the node polynomials of $\mathbb{P}^{2}.$ This was a refinement of a somewhat older conjecture of Di Francesco--Itzykson (see the remarks following Proposition 2 in \cite{DI}), and provides information about the general form of the coefficients of the degree $2r$ rational node polynomial.
\begin{conjecture}
\emph{(Di Francesco--Itzykson--G\"ottsche.)}
\label{plane_shape}
The node polynomials of the projective plane, enumerating $r$-nodal curves of degree $d$ in $\mathbb{P}^{2}$ for $d$ sufficiently large, are of the form
\begin{equation}
\frac{3^{r}}{r!}\sum_{\mu=0}^{2r} \frac{1}{\mu!3^{\lfloor \mu/2 \rfloor}}\frac{r!}{(r-\lceil \mu/2 \rceil)!}Q_{\mu}(r)d^{2r-\mu},
\end{equation}
where $Q_{\mu}$ is a polynomial with integer coefficients and degree $\lfloor \mu/2 \rfloor.$
\end{conjecture}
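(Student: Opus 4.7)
The plan is to start from the Bell polynomial identity $r!\,Z_r = P_r(a_1, \ldots, a_r)$ mentioned in the introduction. Specialising to $\mathbb{P}^{2}$, where $\partial = d^{2}$, $k = -3d$, $s = 9$, $x = 3$, every coefficient of $a_i(d)$ is divisible by $3$ (for the $\partial$-coefficient this uses the divisibility $3 \mid \alpha_i$ also claimed in the introduction). Writing $a_i = 3\tilde a_i$ with $\tilde a_i(d) \in \mathbb{Z}[d]$ of degree two immediately accounts for the $3^{r}$ factor. For the finer shape I would use the generating function, which by \eqref{gen_function} specialises to
$$\sum_{r \ge 0} Z_r(d)\,q^r = \exp\!\bigl(3(d^{2} V_1(q) + d\,V_2(q) + V_3(q))\bigr),$$
where each $V_i$ is a power series in $q$ with $V_i = O(q)$ and rational coefficients determined by the $a_n$.

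The key manoeuvre is to substitute $u = d^{2} q$ and reorganise by powers of $d^{-1}$. The exponent becomes $3u + \sum_{m \ge 1} F_m(u)\,d^{-m}$ for certain polynomials $F_m \in \mathbb{Q}[u]$ extracted from the three summands. A direct calculation shows that each $F_m$ is a polynomial of degree $\lfloor m/2 \rfloor + 1$ whose lowest nonzero monomial is $u^{\lceil m/2 \rceil}$, so the full exponential can be written as $\exp(3u)\cdot\sum_{\mu \ge 0} d^{-\mu} H_\mu(u)$ with $H_\mu(u)$ a polynomial in $u$ of degree $\le \mu$ starting at $u^{\lceil \mu/2 \rceil}$. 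Extracting the coefficient of $d^{2r-\mu} q^r$ gives
$$[d^{2r-\mu}]\,Z_r \;=\; \sum_{n = \lceil \mu/2 \rceil}^{\mu} \frac{3^{\,r-n}}{(r-n)!}\,h_{\mu, n}, \qquad h_{\mu,n} := [u^n] H_\mu(u),$$
and rearranging the prefactors produces
$$Q_\mu(r) \;=\; \mu!\,\sum_{l=0}^{\lfloor \mu/2 \rfloor} 3^{\,\lfloor \mu/2 \rfloor - \lceil \mu/2 \rceil - l}\,(r-\lceil \mu/2 \rceil)_l\,h_{\mu,\,\lceil \mu/2 \rceil + l},$$
where $(x)_l$ is the falling factorial. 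Since $H_\mu$ has degree $\le \mu$ the sum is finite, and $Q_\mu$ is a polynomial in $r$ of degree exactly $\lfloor \mu/2 \rfloor$.

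The step I expect to require the most care is the integer coefficient claim for $Q_\mu$. This reduces to the arithmetic statement that $\mu!\,h_{\mu,n}$ is divisible by $3^{\,n - \lfloor \mu/2 \rfloor}$. I would prove it by applying the Bell polynomial identity a second time: $\mu!\,H_\mu = P_\mu(1!\,F_1, 2!\,F_2, \ldots, \mu!\,F_\mu)$, and one checks from the explicit shape of $F_m$ that $m!\,F_m = 3\,G_m$ with $G_m \in \mathbb{Z}[u]$ (a direct consequence of the divisibility of the coefficients of $a_i$ by $3$). Then
$$\mu!\,H_\mu(u) \;=\; \sum_{\lambda \vdash \mu} \binom{\mu}{\lambda}\,3^{\,k(\lambda)} \prod_i G_i(u)^{c_i(\lambda)},$$
and the key combinatorial lemma to establish is: whenever $[u^n]\prod_i G_i^{c_i} \ne 0$, one has $k(\lambda) \ge n - \lfloor \mu/2 \rfloor$. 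This follows from the bound $n \le \sum_i c_i(\lfloor i/2 \rfloor + 1) = \sum_i c_i \lfloor i/2 \rfloor + k(\lambda)$ together with the identity $\sum_i c_i \lfloor i/2 \rfloor = (\mu - \mathrm{odd}(\lambda))/2$ and the parity observation that $\mathrm{odd}(\lambda) \ge 1$ whenever $\mu$ is odd. Granting this, every term of $\mu!\,h_{\mu,n}$ is divisible by $3^{\,n - \lfloor \mu/2 \rfloor}$, which supplies exactly the power of $3$ needed for $Q_\mu$ to have integer coefficients.
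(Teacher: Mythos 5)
Your argument is correct, but it is organized quite differently from the paper's. The paper proceeds in two stages: first (Theorem \ref{universality}) it feeds the recursion $P_{r+1}=\sum_{k}\binom{r}{k}P_{r-k}a_{k+1}$ into an induction on $\mu$, turning the coefficient $p_{\mu}(r)$ of $d^{2r-\mu}$ in $r!\,T_{r}(d)$ into the solution of a first-order difference equation $u_{r+1}-3u_{r}=3^{r}H(r)$ and concluding $p_{\mu}(r)=3^{r}q_{\mu}(r)$ with $q_{\mu}\in\mathbb{Q}[X]_{\mu}$; second (Theorem \ref{general_form}) it expands $P_{r}(a_{1}(d),\ldots,a_{r}(d))$ over partitions $\sum ji_{j}=r$ and multinomial data $(u_{j},v_{j},w_{j})$, and extracts the factor $3^{r-\lfloor\mu/2\rfloor}r!/\bigl(\mu!\,(r-\lceil\mu/2\rceil)!\bigr)$ by term-by-term divisibilities ($\prod j^{i_{j}}v_{j}!w_{j}!$ divides $\mu!$, $\prod u_{j}!$ divides $(r-\lceil\mu/2\rceil)!$, and $3^{r-\lfloor\mu/2\rfloor}$ divides $\prod\alpha_{j}^{u_{j}}\beta_{j}^{v_{j}}\gamma_{j}^{w_{j}}$), finishing with a degree count. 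You instead pass to the logarithm of the generating function, rescale $u=d^{2}q$, and expand in $d^{-1}$; the single identity $[u^{r}]\bigl(e^{3u}H_{\mu}(u)\bigr)=\sum_{n}3^{r-n}h_{\mu,n}/(r-n)!$ then delivers the polynomiality in $r$, the falling factorial $r!/(r-\lceil\mu/2\rceil)!$ and the power of $3$ all at once, so you recover both of the paper's theorems from one computation; integrality of $Q_{\mu}$ comes from a second application of the Bell identity to $\mu!\,H_{\mu}$ plus your lemma $k(\lambda)\ge n-\lfloor\mu/2\rfloor$, which repackages (rather more cleanly, in my view) the paper's three divisibility inequalities. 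Your valuation and degree bounds on $F_{m}$ and $H_{\mu}$ check out, as does the parity step via $\sum_{i}c_{i}\lfloor i/2\rfloor=(\mu-\mathrm{odd}(\lambda))/2$. The one place where you assert more than you prove is that $Q_{\mu}$ has degree \emph{exactly} $\lfloor\mu/2\rfloor$: in your setup this is equivalent to $h_{\mu,\mu}\neq 0$, and since only parts of size $1$ and $2$ reach the top degree, $h_{\mu,\mu}=[u^{\mu}]\exp\bigl(\beta_{1}u+\frac{1}{2}\alpha_{2}u^{2}\bigr)$, a Hermite-type value whose non-vanishing for every $\mu$ is not obvious; you should either supply that or weaken the claim to degree at most $\lfloor\mu/2\rfloor$. (The paper's own justification of the exact degree, resting on the assertion that only the $k=1$ term of the first sum contributes in degree $\mu-1$, overlooks the $k=0$ term of the second sum and is not fully airtight either, so this is a shared loose end rather than a defect specific to your route.)
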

At the moment of writing, the author does not know of any other proof of this statement in full generality, although there have been partial results in this direction. Note that G\"ottsche adds the observation that the only common factors of the coefficients of $Q_{\mu}$ are powers of 2 and 3. We do not prove this.

Historically, the progress towards an understanding of the structure of the node polynomials (both for $\mathbb{P}^{2}$ in particular and in full generality) has been fragmented. For the projective plane, the number $N_{1}(d)$ was known early in the 19th century. Indeed, one observes that the variety of curves having degree $d$ and passing through $d(d+3)/2 - 1$ general points is a pencil, more precisely, $\{Z(\lambda f + \mu g), [\lambda:\mu] \in \mathbb{P}^{1}\},$ where $f$ and $g$ are degree $d$ homogeneous polynomials in $x,y,z$, the zero sets of which both contain the $d(d+3)/2-1$ specified points. The nodal curves in this pencil occur for pairs $(\lambda,\mu)$ such that
\begin{equation}
\lambda \frac{\partial f}{\partial x} + \mu \frac{\partial g}{\partial x} = \lambda \frac{\partial f}{\partial y} + \mu \frac{\partial g}{\partial y} = \lambda \frac{\partial f}{\partial z} + \mu \frac{\partial g}{\partial z} = 0
\end{equation}
(by homogeneity the vanishing of the three derivatives of $\lambda f + \mu g$ implies the vanishing of $\lambda f + \mu g$ itself). This is a system of three equations in two unknowns, $\lambda$ and $\mu,$ so we have solutions provided that
\begin{equation}
\left| \begin{array}{cc} \frac{\partial f}{\partial x} & \frac{\partial g}{\partial x} \\ \frac{\partial f}{\partial z} & \frac{\partial g}{\partial z} \end{array} \right| = \left| \begin{array}{cc} \frac{\partial f}{\partial y} & \frac{\partial g}{\partial y} \\ \frac{\partial f}{\partial z} & \frac{\partial g}{\partial z} \end{array} \right| = 0
\end{equation}
The two curves of degree $2(d-1)$ which are the zero sets of these determinants intersect, by B\'ezout's theorem, in $\left(2(d-1)\right)^{2}$ points. Remove the $(d-1)^{2}$ points where $\frac{\partial f}{\partial z} = \frac{\partial g}{\partial z} = 0,$ then we are left with $N_{1}(d) = 3(d-1)^{2}.$

A similar formula for 2-nodal curves of ``large enough'' degree, $N_{2}(d) = \frac{3}{2}(d-1)(d-2)(3d^{2}-3d-11),$ was given by Cayley in \cite[Art. 33, p. 306]{Cay} in 1863. It was also found by Salmon in \cite[Appendix IV, p. 506]{Sal}. His method consisted of first counting the number of curves which either had two nodes \textit{or} one cusp, then subtracting the number of cuspidal curves. A similar method is used by Roberts in \cite[p. 111--12]{Rob} in 1875, for 3-nodal curves: He gives a polynomial for curves which either have three nodes, or one node and one cusp, or one tacnode, a polynomial for curves with one tacnode and a polynomial for curves with one node and one cusp. Subtracting the last two polynomials from the first, we get $N_{3}(d) = \frac{9}{2}d^{6}-27d^{5}+\frac{9}{2}d^{4} + \frac{423}{2}d^{3}-229d^{2}-\frac{829}{2}d+525.$

We now already note that $N_{r}(d)$ appears to be given by a polynomial in $d$ of degree $2r,$ at least for $r$ small compared to $d$ (conjecturally if $r \leq 2d-2;$ this bound is discussed in Section 3). Further progress mainly came in the 1990s, when polynomiality became clear in greater generality. The intervention of Bell polynomials was first discovered by Kleiman and Piene.
\begin{theorem}
\emph{(Kleiman--Piene, \cite[Theorem 1.1]{KP1}.)} For $r \leq 8$ and $m \geq 3r,$ if $\mathscr{L}$ can be written as $\mathscr{M}^{\otimes m} \otimes \mathscr{N}$ where $\mathscr{M}$ is very ample and $\mathscr{N}$ is globally generated, then $N_{r}(S,\mathscr{L})$ can be written as a polynomial in the four Chern numbers  $\partial = \mathscr{L}^{2}, k=\mathscr{L}\mathscr{K}_{S}, s=\mathscr{K}_{S}^{2},x=c_{2}(S),$ where $\mathscr{K}_{S}$ is the canonical sheaf on $S.$ More specifically, the expressions are
\begin{equation}
N_{r}(S, \mathscr{L}) = P_{r}(\partial,k,s,x)/r!
\end{equation}
where $P_{r}$ is the $r$th complete exponential \textit{Bell polynomial} in certain universal, linear combinations $a_{i}, 1 \leq i \leq r,$ of the four basic Chern numbers mentioned above.
\end{theorem}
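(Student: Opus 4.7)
The plan is to bootstrap the theorem from the G\"ottsche generating function identity (\ref{gen_function}), combined with the geometric input provided by Lemma~\ref{geom_input}. First, I would take the logarithm of both sides of (\ref{gen_function}), obtaining
\begin{equation}
\log \sum_{r \geq 0} Z_{r}(\partial,k,s,x)\, q^{r} = \partial \log A_{1}(q) + k \log A_{2}(q) + s \log A_{3}(q) + x \log A_{4}(q),
\end{equation}
which is manifestly $\mathbb{Q}$-linear in the four Chern numbers. Expanding each $\log A_{i}(q) = \sum_{j \geq 1} \alpha_{j}^{(i)} q^{j}/j!$ and setting
\begin{equation}
a_{j}(\partial,k,s,x) := \alpha_{j}^{(1)} \partial + \alpha_{j}^{(2)} k + \alpha_{j}^{(3)} s + \alpha_{j}^{(4)} x
\end{equation}
yields a sequence of a priori $\mathbb{Q}$-linear forms in the four Chern numbers which will serve as the candidates $a_{i}$.

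Next, I would exponentiate back. The defining series identity of the complete exponential Bell polynomials,
\begin{equation}
\exp\Bigl( \sum_{j \geq 1} y_{j}\, \frac{q^{j}}{j!} \Bigr) = \sum_{r \geq 0} P_{r}(y_{1},\ldots,y_{r})\, \frac{q^{r}}{r!},
\end{equation}
applied with $y_{j}=a_{j}$, gives $r!\,Z_{r}(\partial,k,s,x) = P_{r}(a_{1},\ldots,a_{r})$ after reading off the coefficient of $q^{r}$. Lemma~\ref{geom_input} then identifies the left-hand side with $r!\,N_{r}(S,\mathscr{L})$ under the stated ampleness hypothesis $\mathscr{L} = \mathscr{M}^{\otimes m} \otimes \mathscr{N}$ with $m \geq 3r$, producing the desired Bell polynomial expression.

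What remains is to promote each $a_{j}$ from a $\mathbb{Q}$-linear to a $\mathbb{Z}$-linear form. I would proceed by induction on $r$. Isolating the unique monomial in which $a_{r}$ appears linearly in the Bell polynomial, we get
\begin{equation}
a_{r} = r!\,Z_{r} - \bigl(P_{r}(a_{1},\ldots,a_{r}) - a_{r}\bigr),
\end{equation}
where the bracketed correction involves only $a_{1},\ldots,a_{r-1}$ with non-negative integer coefficients. Assuming inductively that $a_{1},\ldots,a_{r-1}$ are $\mathbb{Z}$-linear, the right-hand side is integer-valued at every Chern quadruple realized by a genuine pair $(S,\mathscr{L})$ meeting the ampleness hypothesis. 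Since $a_{r}$ is a priori $\mathbb{Q}$-linear, the conclusion follows once enough such pairs are produced so that their Chern quadruples saturate a full-rank sublattice of $\mathbb{Z}^{4}$.

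This last arithmetic point---arranging enough flexibility in the geometric data to force the denominators of $a_{r}$ to disappear---is where I expect the main obstacle to lie. I would attempt it by varying $S$ across families such as products of curves, Hirzebruch surfaces, and their blow-ups, and varying $\mathscr{L}$ within each so that the resulting quadruples $(\partial,k,s,x)$ span $\mathbb{Z}^{4}$ up to finite index. The restriction $r \leq 8$ in the statement would then play its role only in securing the underlying polynomiality of $N_{r}(S,\mathscr{L})$ in the four Chern numbers (via the tools available at the time of the original Kleiman--Piene argument), not in the integrality step itself.
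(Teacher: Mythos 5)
You should first be aware that the paper does not prove this statement at all: it is quoted as Kleiman--Piene's Theorem 1.1, whose original proof is a direct enumerative/intersection-theoretic computation (which is where the hypotheses $r\leq 8$ and $m\geq 3r$ come from). What you have written is essentially the paper's own Section~2 derivation of the closely analogous Theorem~\ref{cor_bell}: take logarithms in (\ref{gen_function}), define the $a_{l}$ from the coefficients of $\log A_{i}$, and exponentiate back through the defining series identity of the complete exponential Bell polynomials. That part of your argument is correct, and is a legitimate (if anachronistic, since it rests on Tzeng/Kool--Shende--Thomas rather than on anything available to Kleiman--Piene) route to the Bell-polynomial structure; you should only add the remark that $\mathscr{M}^{\otimes m}\otimes\mathscr{N}$ with $m\geq 3r$ is $m$-very ample, hence $r$-very ample, so that the identification $N_{r}(S,\mathscr{L})=Z_{r}(\partial,k,s,x)$ actually applies under the stated hypotheses.

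The genuine gap is in your final integrality step. The Chern quadruples $(\partial,k,s,x)$ of actual polarized surfaces can never generate $\mathbb{Z}^{4}$: Noether's formula forces $s+x\equiv 0\pmod{12}$ and Riemann--Roch forces $\partial\equiv k\pmod{2}$, so every realized quadruple lies in a fixed sublattice of index $24$. A $\mathbb{Q}$-linear form can be integer-valued on a full-rank sublattice without having integer coefficients (for instance $(s+x)/12$ or $(\partial-k)/2$), so ``saturating a full-rank sublattice up to finite index'' --- which is exactly what you propose --- cannot force the denominators of $a_{r}$ to disappear. This is precisely why Lemma~\ref{geom_input} carries the extra hypothesis that $n_{1}$ and $n_{4}$ are \emph{already} integers: the paper obtains those two integralities not from the integrality of curve counts on any surface, but from explicit power-series manipulations of the G\"ottsche--Yau--Zaslow formula on Abelian surfaces (the recursion (\ref{eqn:recursion}) for the $D_{n}$ in the proof of Proposition~\ref{alpha}; note that even there $\partial=\mathscr{L}^{2}$ is always even, so integer-valuedness alone would only give $2n_{1}\in\mathbb{Z}$) and on a K3 surface for $n_{4}$, and only then uses the three specific surfaces $(\mathbb{P}^{2},\mathscr{O})$, $(\mathbb{F}_{1},\mathscr{O})$ and $(\mathbb{F}_{3},\mathscr{O}(E+F))$ to pin down $n_{3}$ and $n_{2}$. (If you only want the statement as literally quoted --- ``linear combinations,'' with no integrality claim --- then your first two paragraphs already suffice and the problematic step can simply be dropped; but as a proof of the $\mathbb{Z}$-linearity asserted in Theorem~\ref{cor_bell}, the argument as proposed does not close.)
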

As a natural part of their proof, they provided an algorithm whose output was the polynomials $a_{i}.$ They are reproduced below:
\begin{eqnarray*}
a_{1} & = & 3\partial + 2k + x \\
a_{2} & = & -42\partial - 39k - 6s - 7x \\
a_{3} & = & 1380\partial + 1576k + 376s + 138x \\
a_{4} & = & -72360\partial - 95670k - 28842s - 3888x \\
\end{eqnarray*}
\begin{eqnarray*}
a_{5} & = & 5225472 \partial + 7725168 k + 2723400 s + 84384 x \\
a_{6} & = & -481239360 \partial -778065120 k - 308078520 s + 7918560 x\\
a_{7} & = & 53917151040\partial + 93895251840k + 40747613760s - 2465471520x\\
a_{8} & = & -7118400139200\partial - 13206119880240k - 6179605765200s + 516524964480x.
\end{eqnarray*}

Numerous other contributions were made to the understanding of the node polynomials, culminating in the conjectures of G\"ottsche mentioned above, and, perhaps more importantly, his \cite[Conjecture 2.4]{Got}, concerning the generating function of the generic node polynomials $Z_{r}(\partial,k,s,x),$ later proved by Tzeng (using algebraic cobordism theory).
\begin{theorem}
\label{gen_got}
\emph{(Tzeng, \cite[Theorem 1.2]{Tzeng}.)}
Consider the second Eisenstein series $G_{2}(\tau) = -1/24 + \sum_{n=1}^{\infty}\left( \sum_{d|n}d \right) q^{n},$ and the Ramanujan discriminant modular form $\Delta(\tau) = q \prod_{m > 0} (1-q^{m})^{24},$ where $q := e^{2\pi i\tau}$. Let $D$ denote the differential operator $q\frac{d}{dq}.$ There exist universal (independent of $S$ and $\mathscr{L}$) power series $B_{1}$ and $B_{2}$ in $\mathbb{Q}[[q]],$ such that
\begin{equation}
\label{gyz}
\sum_{r \geq 0} Z_{r}(\partial,k,s,x)(DG_{2}(\tau))^{r} = \frac{(DG_{2}(\tau)/q)^{\chi(\mathscr{L})}B_{1}(q)^{\mathscr{K}_{S}^{2}}B_{2}(q)^{\mathscr{L} \mathscr{K}_{S}}}{(\Delta(\tau)D^{2}G_{2}(\tau)/q^{2})^{\chi(\mathscr{O}_{S})/2}}.
\end{equation}
\end{theorem}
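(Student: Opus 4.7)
The plan is to combine universality of the node counts with an explicit evaluation on enough test surfaces to identify the modular ingredients. First I would invoke G\"ottsche's polynomiality conjecture, namely that $N_{r}(S,\mathscr{L}) = Z_{r}(\partial,k,s,x)$ for universal polynomials $Z_{r}$ once $\mathscr{L}$ is sufficiently ample. This is the deepest step and in Tzeng's proof it is handled by algebraic cobordism methods, reducing the question to a finite collection of test surfaces that span a basis of the cobordism ring of pairs $(S,\mathscr{L})$.

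Second, I would establish the product form (\ref{gen_function}). The four Chern numbers $\partial,k,s,x$ are additive under (formal) disjoint unions of pairs $(S,\mathscr{L})$, while the generating series $\sum_{r} N_{r} q^{r}$ is multiplicative under such disjoint unions. Combined with the universality just obtained, these two facts force the generating series into the product shape $A_{1}(q)^{\partial} A_{2}(q)^{k} A_{3}(q)^{s} A_{4}(q)^{x}$ for some universal power series $A_{i} \in \mathbb{Q}[[q]]$.

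Third, I would change basis on the exponents. Noether's formula $12\chi(\mathscr{O}_{S}) = \mathscr{K}_{S}^{2} + c_{2}(S)$ together with Hirzebruch--Riemann--Roch $\chi(\mathscr{L}) = \chi(\mathscr{O}_{S}) + \frac{1}{2}(\mathscr{L}^{2} - \mathscr{L}\mathscr{K}_{S})$ allow me to re-express the quadruple $(\partial,k,s,x)$ as an integer-linear combination of $(\chi(\mathscr{L}), \chi(\mathscr{O}_{S}), \mathscr{K}_{S}^{2}, \mathscr{L}\mathscr{K}_{S})$. The generating function accordingly rewrites as $C_{1}(q)^{\chi(\mathscr{L})} C_{2}(q)^{\chi(\mathscr{O}_{S})} B_{1}(q)^{\mathscr{K}_{S}^{2}} B_{2}(q)^{\mathscr{L}\mathscr{K}_{S}}$ for some new universal series $C_{1}, C_{2}, B_{1}, B_{2}$ obtained from the $A_{i}$ by elementary reshuffling.

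The main obstacle is the fourth step: identifying the series $C_{1}$ and $C_{2}$ (and hence also $B_{1}, B_{2}$) with the specific modular expressions on the right hand side of (\ref{gyz}) after the substitution $q \mapsto DG_{2}(\tau)$. Since the four universal series are independent of $(S,\mathscr{L})$, it suffices to evaluate both sides of the conjectural identity on a collection of surfaces whose Chern quadruples span $\mathbb{Q}^{4}$, and to match the resulting $q$-expansions. The natural candidates are K3 surfaces, where the Yau--Zaslow formula expresses $\sum_{r} N_{r} q^{r}$ in terms of $1/\Delta(\tau)^{1/2}$; abelian surfaces, which isolate the $\mathscr{K}_{S}^{2} = 0$, $\chi(\mathscr{O}_{S}) = 0$ locus; and $\mathbb{P}^{2}$ together with $\mathbb{P}^{1} \times \mathbb{P}^{1}$, on which small-$r$ counts are known by direct geometric arguments and Caporaso--Harris type recursions. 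Matching the resulting $q$-expansions with the modular forms $DG_{2}$, $D^{2}G_{2}$, and $\Delta$ is the genuinely analytic obstruction, and it relies on the Yau--Zaslow conjecture together with its abelian-surface analogue, both established prior to Tzeng's work.
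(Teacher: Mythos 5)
You should first note that the paper does not prove this statement at all: it is imported wholesale as Tzeng's Theorem~1.2, with the K3 and abelian cases credited to Bryan--Leung, so there is no in-paper argument to compare yours against. Judged on its own terms, your outline is a fair reconstruction of the strategy actually used in the literature (algebraic cobordism for universality and multiplicativity, hence the product form (\ref{gen_function}); a change of exponent basis via Noether and Hirzebruch--Riemann--Roch; then evaluation on special surfaces to pin down the modular factors). But as written it is a plan, not a proof: both load-bearing steps are named rather than executed. Step one requires the degeneration/double-point argument showing that $(S,\mathscr{L})\mapsto\sum_r N_r(S,\mathscr{L})q^r$ descends to a homomorphism from the cobordism group of pairs, and step four requires verifying that the Bryan--Leung count of nodal rational curves on a K3 surface in a primitive class of varying degree coincides with $N_r(S,\mathscr{L})$ as defined here, and then actually matching $q$-expansions after inverting the change of variable $y=DG_{2}(\tau)$. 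Neither verification is sketched.

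There is also a conceptual slip in your fourth step. The theorem does not assert any identification of $B_{1}$ and $B_{2}$ with modular expressions; it only asserts their \emph{existence}, which already follows from your third step (the paper itself remarks that $B_{1}$ and $B_{2}$ ``are still not identified''). The genuinely analytic content is confined to the $\chi(\mathscr{L})$- and $\chi(\mathscr{O}_{S})$-factors, i.e.\ to showing $C_{1}=DG_{2}/q$ and $C_{2}=(\Delta\, D^{2}G_{2}/q^{2})^{-1/2}$, and for that purpose K3 surfaces alone suffice: varying the degree of a primitive polarization sweeps out $\chi(\mathscr{L})$ while $\chi(\mathscr{O}_{S})=2$ is frozen and $\mathscr{K}_{S}=0$ kills the $B_{i}$-factors, so the Yau--Zaslow generating function determines first $C_{1}$ (as a ratio) and then $C_{2}$ (as a square root, normalized by the constant term). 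Abelian surfaces, $\mathbb{P}^{2}$ and $\mathbb{P}^{1}\times\mathbb{P}^{1}$ belong to the cobordism-basis argument of your first step, not to the modular matching. Reorganizing the proposal around this division --- existence of $B_{1},B_{2}$ is formal, identification of $C_{1},C_{2}$ is where Yau--Zaslow enters --- would turn it into an accurate summary of Tzeng's proof, though filling in the two steps above would still be the real work.
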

This is often referred to as the G\"ottsche--Yau--Zaslow formula. The special cases of $K3$ and Abelian surfaces were proved by Bryan and Leung in \cite{BL1} and \cite{BL2}. Although the power series $B_{1}(q)$ and $B_{2}(q)$ are still not identified, their terms can be computed using the recursive formula for the projective plane of Caporaso--Harris (cf. \cite{CH}), as soon as a threshold value for polynomial validity is known. For instance, G\"ottsche calculated the terms of these power series up to degree 28 (see \cite[Remark 2.5]{Got}), with the conjectural threshold value $r \leq 2d-2.$ We use this formula to calculate the linear polynomials $a_{i}(\partial,k,s,x)$ for $1 \leq i \leq 11.$

It should be mentioned that \textit{tropical geometry} has provided yet another angle of attack for this sort of enumerative problems. Using Mikhalkin's correspondence theorem, Fomin--Mikhalkin gave a combinatorial proof (see \cite[Theorem 5.1]{FM}) of the existence of plane node polynomials $T_{r}(d) \in \mathbb{Q}[d]$ of degree $2r.$ Their threshold value (the maximal value of $r$ for which we have a polynomial expression of $N_{r}(d)$) was $d/2.$ Later (see \cite{Blo}), Block refined the threshold value to $d$ and provided computations of the node polynomials up to $r=14,$ confirming (and extending) the original conjecture of Di Francesco--Itzykson. Nevertheless, these methods have so far not confirmed the general expressions of the cofficients of the node polynomials.\\

\textbf{Structure of this article.} In Sec. 2, we study the node polynomials $Z_{r}(\partial,k,s,x)$ as Bell polynomials in certain universal polynomials $a_{i},$ which we show are integer linear polynomials in $\partial,k,s$ and $x,$ and calculate using the G\"ottsche--Yau--Zaslow formula for all $i \leq 11.$ In Sec. 3, we provide some general remarks on polynomial validity and the shape of the polynomials $a_{i},$ before we proceed to prove Conjecture \ref{plane_shape} of Di Francesco--Itzykson--G\"ottsche in Sec. 4.\\

\textbf{Conventions and notations.} We make no distinction between \textit{line bundles} and \textit{invertible sheaves}. If $\mathscr{L}$ and $\mathscr{K}$ are two line bundles on some variety, we define $\mathscr{LK}$ as $\textnormal{deg }c_{1}(\mathscr{L})c_{1}(\mathscr{K}) \in \mathbb{Z}.$ We consider $c_{2}(S)$ as a number, not as a class, where $S$ is some smooth projective surface. We reserve the notation $T_{r}(d)$ for the node polynomials of $\mathbb{P}^{2}$ equipped with the line bundle $\mathscr{O}_{\mathbb{P}^{2}}(d),$ and use $Z_{r}(\partial,k,s,x)$ to denote the general node polynomials. Thus $T_{r}(d) = Z_{r}(d^{2},-3d,9,3).$ On the other hand, we will simply use $a_{i}(d)$ to denote the $\mathbb{P}^{2}$-polynomial (in $d$) $a_{i}(d^{2},-3d,9,3).$ The generating function $\sum_{r \geq 0} Z_{r}(\partial,k,s,x)q^{r}$ will be denoted by $\phi(S,\mathscr{L})(q).$ Proofs are terminated by a $\Box,$ while the endings of examples and definitions are marked by a $\blacksquare.$\\

\textbf{Acknowledgements.} First and foremost, I am very grateful to my advisor, Professor Ragni Piene, for introducing me to the subject of enumerative geometry, and steadily guiding me towards the present paper. I would also like to thank Professor Steven Kleiman for valuable comments, and the referee for important and constructive remarks. Finally, I am grateful to John Christian Ottem for a very fruitful discussion on the sigma function, and to Karoline Moe for a discussion on functional equations.

\section{Combinatorial approach to node polynomials}
\begin{definition}
\label{bell}
For all integers $r \geq 0,$ the $r$th complete (exponential) Bell polynomial in $r$ variables $a_{i}$ is defined recursively by setting $P_{0} = 1$ and, for all $r \geq 0,$
\begin{equation}
P_{r+1}(a_{1},\ldots,a_{r+1}) = \sum_{k=0}^{r} {r \choose k} P_{r-k}(a_{1}, \ldots, a_{r-k})a_{k+1}.
\end{equation}
This is equivalent to the following formal identity in $t,$
\begin{equation}
\sum_{r \geq 0}P_{r}t^{r}/r! = \exp \left(\sum_{j \geq 1}a_{j}t^{j}/j!\right).
\end{equation}
\end{definition}
\begin{flushright}
$\blacksquare$
\end{flushright}
It is easy to see that these two definitions are equivalent: To see that the formal identity implies the recursion, let $\psi(t)$ denote the generating function $\sum P_{r}t^{r}/r!,$ then differentiate the formal identity
\begin{equation}
\log \psi(t) = \sum a_{j}t^{j}/j!,
\end{equation}
yielding $\psi'(t)/\psi(t) = \sum a_{j}t^{j-1}/(j-1)!,$ thereby getting $\psi'(t) = \psi(t)\sum a_{j}t^{j-1}/(j-1)!.$ Simply by comparing coefficients of the terms in $t^{r},$ we get the recursive relation. The other way around is just as simple.

\begin{definition}
Consider a smooth, irreducible projective surface $S$ over the complex numbers. Let $\mathscr{L}$ be a line bundle on $S.$ We say that $\mathscr{L}$ is \textit{$l$-very ample}, where $l \in \mathbb{N},$ if for all subschemes $Z \subseteq S$ of length $l+1,$ the natural map $H^{0}(S,\mathscr{L}) \rightarrow H^{0}(S,\mathscr{L} \otimes_{\mathscr{O}_{S}} \mathscr{O}_{Z})$ is surjective. Also, recall that the \textit{canonical sheaf} of $S$ is the invertible sheaf $\mathscr{K}_{S} := \bigwedge^{2} \Omega^{1}_{S},$ where $\Omega^{1}_{S}$ is the sheaf of K\"ahler differentials. Finally, if $\mathscr{L}$ is a line bundle on $S,$ the four \textit{Chern numbers} of $(S,\mathscr{L})$ are $\partial := \mathscr{L}^{2}, k := \mathscr{LK}_{S}, s:= \mathscr{K}_{S}^{2}$ and $x := c_{2}(S).$
\end{definition}
\begin{flushright}
$\blacksquare$
\end{flushright}

According to \cite[Theorem 4.1]{KST}, for all $r \geq 0$ there exists a degree $r$ polynomial $Z_{r}$ in four variables, with rational coefficients, such that if $S$ is a smooth projective surface with a line bundle $\mathscr{L}$ which is $r$-very ample, the number of $r$-nodal curves in the linear system $|\mathscr{L}|$ which pass through $\textnormal{dim }|\mathscr{L}|-r$ points in general position on $S$ is given precisely by $Z_{r}(\partial,k,s,x).$ According to \cite[Proposition 2.3]{Got}, there exist four universal power series $A_{i}(q)$ with rational coefficients such that
\begin{equation}
\phi(S,\mathscr{L})(q) := \sum_{r \geq 0} Z_{r}(\partial, k, s, x)q^{r} = A_{1}(q)^{\partial}A_{2}(q)^{k}A_{3}(q)^{s}A_{4}(q)^{x}.
\end{equation}

For given $S$ and $\mathscr{L},$ the generating function $\phi(S,\mathscr{L})(q)$ defined above has integer coefficients, the constant term being 1 (thus its inverse power series has integer coefficients). Write, formally, $(\log A_{i})(q) = \sum_{l \geq 1}b^{(i)}_{l}q^{l},$ where $b^{(i)}_{l} \in \mathbb{Q}.$ Then:
\begin{eqnarray*}
\phi(S,\mathscr{L})(q) & = & \exp \left(\partial \log A_{1}(q) + k \log A_{2}(q) + s \log A_{3}(q) + x \log A_{4}(q)\right) \\
& = & \exp \left( \sum_{l=1}^{\infty} \Bigl(b^{(1)}_{l}\partial + b^{(2)}_{l}k + b^{(3)}_{l}s + b^{(4)}_{l}x\Bigr)q^{l} \right) \\
& = & \exp \left( \sum_{l=1}^{\infty} \frac{a_{l}(\partial,k,s,x)}{l!}q^{l} \right) \\
& = & \sum_{r=0}^{\infty}\frac{P_{r}\Bigl(a_{1}(\partial,k,s,x),\ldots, a_{r}(\partial,k,s,x)\Bigr)}{r!}q^{r},
\end{eqnarray*}
where we have defined $a_{l}(t,u,v,w) := l!\Bigl(b^{(1)}_{l}t + b^{(2)}_{l}u + b^{(3)}_{l}v + b^{(4)}_{l}w\Bigr)$ and used the formal definition of the Bell polynomials $P_{r}.$ Rewrite $a_{l}$ as
\begin{equation}
\label{factorial}
a_{l}(t,u,v,w) = (l-1)!\Bigl(lb^{(1)}_{l}t + lb^{(2)}_{l}u + lb^{(3)}_{l}v + lb^{(4)}_{l}w\Bigr).
\end{equation}
We claim that what is inside the parentheses is a polynomial in the four variables $t,u,v,w$ with integer coefficients. So we must show that $\frac{d}{dq} \log A_{i}(q)$ has integer coefficients. Indeed, for all $S$ and $\mathscr{L},$
\footnotesize
\begin{displaymath}
\frac{d}{dq} \Bigl(\log \phi(S,\mathscr{L})(q)\Bigr) = \Bigl(\phi(S,\mathscr{L})(q)\Bigr)^{-1}\frac{d}{dq} \Bigl(\phi(S,\mathscr{L})(q)\Bigr) =  \left(\partial \frac{d}{dq} \log A_{1}(q) + \ldots + x \frac{d}{dq}  \log A_{4}(q)\right).
\end{displaymath}
\normalsize
Since both $\bigl(\phi(S,\mathscr{L})(q)\bigr)^{-1}$ and $\frac{d}{dq} \bigl(\phi(S,\mathscr{L})(q)\bigr)$ have integer coefficients as power series in $q$, the right-hand side is a power series with integer coefficients which are linear combinations of $\partial,k,s$ and $x.$ Now observe the following lemma.

\begin{lemma}
\label{geom_input}
Suppose $n_{i}, 1 \leq i \leq 4,$ are rational numbers satisfying the following requirements:
\begin{enumerate}[(1)]
\item $n_1$ and $n_4$ are integers;
\item for all polarized, smooth, irreducible projective surfaces $(S,\mathscr{L}),$ the number $n_{1}\partial + n_{2} k + n_{3} s + n_{4} x$ is an integer.
\end{enumerate}
Then each $n_{i} \in \mathbb{Z}.$
\end{lemma}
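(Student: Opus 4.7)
The plan is to test the linear form $L(\partial,k,s,x) := n_{1}\partial + n_{2}k + n_{3}s + n_{4}x$ on a short list of explicit polarized surfaces whose Chern numbers are known, and to pin down the rational numbers $n_{2}$ and $n_{3}$ by elementary $\gcd$ arguments. Throughout, I will freely use the standing hypothesis that $n_{1},n_{4} \in \mathbb{Z}$.

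First I would isolate $n_{2}$. On $(\mathbb{P}^{2}, \mathscr{O}_{\mathbb{P}^{2}}(d))$ the Chern numbers are $(d^{2},-3d,9,3)$, so hypothesis (2) yields $n_{1}d^{2} - 3n_{2}d + 9n_{3} + 3n_{4} \in \mathbb{Z}$ for every $d \geq 1$. Subtracting the cases $d=1$ and $d=2$ kills the $n_{3}$- and $n_{4}$-terms and leaves $3n_{1} - 3n_{2} \in \mathbb{Z}$, whence $3n_{2} \in \mathbb{Z}$. Next, on $(\mathbb{P}^{1}\times\mathbb{P}^{1}, \mathscr{O}(a,b))$ the Chern numbers are $(2ab,-2(a+b),8,4)$; subtracting the cases $(a,b)=(1,1)$ and $(a,b)=(1,2)$ yields $2n_{1} - 2n_{2} \in \mathbb{Z}$, so $2n_{2} \in \mathbb{Z}$. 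Since $\gcd(2,3)=1$, this forces $n_{2} \in \mathbb{Z}$.

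With $n_{1}, n_{2}, n_{4} \in \mathbb{Z}$ now in hand, I would pin down $n_{3}$ from the same two families. Plugging $d=1$ into the $\mathbb{P}^{2}$ relation gives $n_{1} - 3n_{2} + 9n_{3} + 3n_{4} \in \mathbb{Z}$, hence $9n_{3} \in \mathbb{Z}$; plugging $(a,b)=(1,1)$ into the $\mathbb{P}^{1}\times\mathbb{P}^{1}$ relation gives $2n_{1} - 4n_{2} + 8n_{3} + 4n_{4} \in \mathbb{Z}$, hence $8n_{3} \in \mathbb{Z}$. Since $\gcd(8,9)=1$, we conclude $n_{3} \in \mathbb{Z}$, finishing the proof.

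There is no serious technical obstacle here: the entire content of the argument is that the two families $(\mathbb{P}^{2}, \mathscr{O}(d))$ and $(\mathbb{P}^{1}\times\mathbb{P}^{1}, \mathscr{O}(a,b))$ together produce tuples $(\partial,k,s,x)$ rich enough to separate the coefficients $n_{2}$ and $n_{3}$. A single family by itself would only yield the weaker conclusions $n_{2}\in\frac{1}{3}\mathbb{Z}$ or $n_{3}\in\frac{1}{9}\mathbb{Z}$; the coprimality relations $\gcd(2,3)=1$ and $\gcd(8,9)=1$ afforded by combining the two families are what make the argument close. Note also that all line bundles used are ample, so the hypothesis is legitimately applied to polarized pairs.
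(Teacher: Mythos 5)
Your proof is correct, but it follows a genuinely different route from the paper's. The paper first pins down $n_{3}$ by evaluating the form on $(\mathbb{P}^{2},\mathscr{O}_{\mathbb{P}^{2}})$ and $(\mathbb{F}_{1},\mathscr{O}_{\mathbb{F}_{1}})$, where $\partial=k=0$ and the pairs $(s,x)$ are $(9,3)$ and $(8,4)$; subtracting gives $n_{3}-n_{4}\in\mathbb{Z}$ directly, hence $n_{3}\in\mathbb{Z}$. It then gets $n_{2}$ in one stroke by choosing $\mathscr{O}(E+F)$ on the Hirzebruch surface $\mathbb{F}_{3}$, for which $k=\mathscr{L}\mathscr{K}_{S}=-1$, so the coefficient of $n_{2}$ is a unit and no $\gcd$ argument is needed. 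You instead vary the line bundle within two fixed families ($\mathscr{O}_{\mathbb{P}^{2}}(d)$ and $\mathscr{O}_{\mathbb{P}^{1}\times\mathbb{P}^{1}}(a,b)$), take differences to isolate $n_{2}$ up to the factors $3$ and $2$, and close with $\gcd(2,3)=1$, then do the same for $n_{3}$ with $\gcd(8,9)=1$; all your computations of Chern numbers and the coprimality steps check out. Each approach buys something: the paper's choices make the relevant coefficients $0$ or $\pm 1$ so the conclusion drops out immediately, whereas your argument uses only ample line bundles, so every test pair is honestly a polarized surface as the lemma's hypothesis literally requires (the paper's use of structure sheaves is not, strictly speaking, a polarization, though this is harmless in the context where the lemma is applied).
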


\begin{proof}
Recall that $\mathbb{P}^{2}$ has Chern numbers $\mathscr{K}_{\mathbb{P}^{2}}^{2} = 9$ and $c_{2}(\mathbb{P}^{2}) = 3.$ Blowing up $\mathbb{P}^{2}$ in a point yields a surface $\mathbb{F}_1$ which has Chern numbers 8 and 4. Let $\mathscr{L}$ be the structure sheaf on $\mathbb{P}^{2},$ then from (2) it follows that $9n_3 + 3n_4 \in \mathbb{Z}.$ Next, consider the structure sheaf on $\mathbb{F}_1,$ so that we get $8n_3 + 4n_4 \in \mathbb{Z}.$ Hence, taking the difference, $n_3-n_4 \in \mathbb{Z}.$ But $n_4$ is an integer, so $n_3$ is as well. Now, let $S$ be the Hirzebruch surface $\mathbb{F}_{3},$ and denote by $E$ the class of the exceptional curve, and $F$ the class of a fiber. Let $\mathscr{L}$ be $\mathscr{O}(1,1) := \mathscr{O}(E+F).$ Then
\begin{eqnarray*}
k & = & \mathscr{K}_{S} \cdot \mathscr{L} \\
& = & (-2E - 5F) \cdot (F + E) = -2EF - 2E^2 - 5F^2 -5FE \\
& = & -2 - 2\cdot (-3) - 0 - 5 = -1,
\end{eqnarray*}
since the canonical divisor is $-2E-(n+2)F$ on $\mathbb{F}_{n}.$ It follows from (1) and (2) that $n_2$ is also an integer.
\end{proof}

So we must prove that $n_1$ and $n_4,$ i.e., the coefficients of respectively $\partial$ and $x$ in the $q^{n}$-term of $\frac{d}{dq}\log \phi(S,\mathscr{L})(q),$ are integers. The fact that $n_1$ is an integer follows direcly from the computation of the $D_{n},$ cf. Eqn. (\ref{eqn:recursion}). Now, a similar procedure, only using a K3 surface $S$ with $\mathscr{L} = \mathscr{O}_{S}$ so that the Chern numbers $\partial, k, s$ are 0 while $x \neq 0,$ allows for an explicit proof that also $n_4,$ the coefficient of $x,$ is an integer, through the G\"ottsche--Yau--Zaslow formula. The process is exactly as in the proof of Prop. \ref{alpha}, only with sligthly different power series. Thus, we see that the coefficients of each $\frac{d}{dq} \log A_{i}(q)$ are integers. Hence, recalling that $P_{r}$ is the $r$th Bell polynomial, the following theorem is a consequence of \cite[Proposition 2.3]{Got}.
\begin{theorem}
\label{cor_bell}
For all $i \geq 1$ there exists a polynomial $a_{i},$ which is $\mathbb{Z}$-linear in four variables, such that for all $r \geq 0,$
\begin{displaymath}
Z_{r}(\partial,k,s,x) = \frac{P_{r}(a_{1}(\partial,k,s,x), \ldots, a_{r}(\partial,k,s,x))}{r!}.
\end{displaymath}
\end{theorem}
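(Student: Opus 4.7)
The proof is essentially laid out in the paragraphs preceding the theorem, and the plan is to make it explicit: extract the polynomials $a_i$ from Göttsche's product decomposition of $\phi(S,\mathscr{L})(q)$, recognize the resulting exponential as a Bell-polynomial generating function, and then upgrade the a priori $\mathbb{Q}$-linearity of the $a_i$ to $\mathbb{Z}$-linearity via Lemma \ref{geom_input}.

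The first step is formal. Starting from $\phi(S,\mathscr{L})(q) = A_1(q)^{\partial}A_2(q)^{k}A_3(q)^{s}A_4(q)^{x}$, take formal logarithms and expand $\log A_i(q) = \sum_{l \geq 1} b^{(i)}_l q^l$ with $b^{(i)}_l \in \mathbb{Q}$, and define
$$a_l(\partial,k,s,x) := l!\bigl(b^{(1)}_l \partial + b^{(2)}_l k + b^{(3)}_l s + b^{(4)}_l x\bigr).$$
Then by the exponential-generating-function characterisation of the $P_r$ recalled in Definition \ref{bell}, one has
$$\phi(S,\mathscr{L})(q) = \exp\sum_{l \geq 1} a_l(\partial,k,s,x)q^l/l! = \sum_{r \geq 0} P_r\bigl(a_1,\ldots,a_r\bigr)q^r/r!,$$
and comparing coefficients of $q^r$ gives the desired identity $Z_r = P_r/r!$. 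It thus remains only to verify that each rational number $l\,b^{(i)}_l$ (i.e.\ the coefficient of $\partial,k,s$ or $x$ in $a_l$) is in fact an integer.

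For this, I use that $\phi(S,\mathscr{L})(q)$ has integer coefficients and constant term $1$ for every $(S,\mathscr{L})$, so $\phi^{-1}$ and $\phi'$ have integer coefficients, and hence so does $(\log\phi)' = \phi'/\phi$. The coefficient of $q^{l-1}$ in $(\log\phi)'$ is $n_1\partial + n_2 k + n_3 s + n_4 x$ with $n_i := l\,b^{(i)}_l \in \mathbb{Q}$, and by the observation above this is an integer for every polarized smooth projective surface $(S,\mathscr{L})$. Lemma \ref{geom_input} then reduces the problem to showing that $n_1$ and $n_4$ individually lie in $\mathbb{Z}$.

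These two reductions are handled by two distinct special cases. For $n_1$, the coefficient of $\partial$ in $a_l$ is given termwise by the recursion (\ref{eqn:recursion}) introduced in Section 3 for the sequence $D_n$, whose output is visibly integral. For $n_4$, one specialises to a $K3$ surface with $\mathscr{L} = \mathscr{O}_S$: here $\partial = k = s = 0$ while $x = 24 \neq 0$, so $(\log\phi)'$ reduces to $24\,n_4$ in each $q$-coefficient; the Göttsche--Yau--Zaslow formula of Theorem \ref{gen_got} then gives $\phi$ explicitly as a rational expression in $DG_2(\tau)/q$ and $\Delta(\tau)D^2G_2(\tau)/q^2$, from which one reads off $n_4$ and checks integrality. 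The main obstacle is this last step: Lemma \ref{geom_input} cannot be used to bootstrap $n_4 \in \mathbb{Z}$, so one must genuinely manipulate the modular power series of Theorem \ref{gen_got}, using the integrality of the coefficients of $DG_2$ and $\Delta$, to exhibit $n_4$ as an integer directly. Once $n_1$ and $n_4$ are secured, Lemma \ref{geom_input} closes the argument.
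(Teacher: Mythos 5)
Your proposal follows the paper's own proof essentially verbatim: the same extraction of $a_l$ from the logarithms of Göttsche's $A_i(q)$, the same Bell-polynomial identification, the same integrality argument via $(\log\phi)'=\phi'/\phi$ combined with Lemma \ref{geom_input}, and the same two special cases (the Abelian-surface recursion for the coefficient of $\partial$, a $K3$ with trivial line bundle and the Göttsche--Yau--Zaslow formula for the coefficient of $x$). Your explicit remark that the $K3$ case cannot be bootstrapped from Lemma \ref{geom_input} and requires a direct power-series manipulation matches the paper's appeal to the method of Proposition \ref{alpha} with slightly different series.
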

The first eight polynomials $a_{i}$ appeared algorithmically in Kleiman--Piene's paper \cite{KP1}. In \cite{Qvi}, we explain their intersection theoretical interpretation. Observing that Noether's formula gives $\chi(\mathscr{O}_{S}) = \frac{1}{12}(c_{1}(S)^{2}+c_{2}(S)) = \frac{s+x}{12},$ while the Hirzebruch--Riemann--Roch formula gives $\chi(\mathscr{L}) = \chi(\mathscr{O}_{S}) + \frac{1}{2}\mathscr{L}(\mathscr{L}-\mathscr{K}_{S}) = \frac{s+x}{12}+\frac{\partial-k}{2},$ the G\"ottsche--Yau--Zaslow formula (\ref{gyz}) can be used to calculate each $a_{i}$ explicitly as a linear combination of $\partial,k,s,x.$ The following algorithm yields the polynomials $a_{i},$ provided that we know the coefficients of the polynomials $B_{i}(q)$ up to the degree we are interested in. These can be calculated using the Caporaso--Harris recursive relation (\cite[Theorem 1.1]{CH}), which, in fact, G\"ottsche did up to degree 28 (\cite[Remark 2.5]{Got}), assuming polynomiality holds for $r \leq 2d-2.$ One could also use Block's algorithm in \cite{Blo}, which is combinatorial in nature and uses labeled floor diagrams from tropical geometry, so that it is independent of the Caporaso--Harris formula. 

\begin{definition}
For each integer $n \geq 1,$ let $\sigma(n)$ denote the sum of the positive divisors of $n,$ including 1 and $n$ itself.
\end{definition}

\begin{algorithm}
Put $y:=DG_{2}(\tau),$ and consider $y$ as a power series in $q:=e^{2\pi i\tau}:$ 
\begin{equation}
y = DG_{2}(\tau) = f(q) = \sum_{n=1}^{\infty}n\sigma(n) q^{n}, \textnormal{ where } \sigma(n) = \sum _{d | n} d.
\end{equation}
Since $f(0) = 0$ while $f'(0) = 1 \neq 0,$ the inversion theorem of Lagrange tells us that we can formally invert the series, to express $q$ as a power series in $y.$ The inverted series has the form

\begin{displaymath}
q = g(y) = \sum_{n=1}^{\infty} \frac{d^{n-1}}{dq^{n-1}} \left(\frac{q}{f(q)}\right)^{n}_{|q=0} \frac{{y}^{n}}{n!}.
\end{displaymath}
Write the G\"ottsche--Yau--Zaslow formula (\ref{gyz}) using the variables $\partial, k, s$ and $x:$
\begin{equation}
\sum_{r = 0}^{\infty} Z_{r}(\partial,k,s,x)y^{r} = \frac{(y/g(y))^{\frac{s+x}{12}+\frac{\partial-k}{2}}B_{1}(g(y))^{s}B_{2}(g(y))^{k}}{\Bigl(\Delta(g(y))D^{2}G_{2}(g(y))/g(y)^{2}\Bigr)^{\frac{s+x}{24}}}.
\end{equation}
Assume we want to calculate $a_{i}$ for $1 \leq i \leq m.$ The algorithm is the following: Start by calculating the terms of the right-hand side up to degree $m.$ Then extract the coefficient of $y;$ this is $a_{1}.$ Assume that we know all $a_{i}$ up to $i=n-1,$ where $n \leq m.$ Then:

\begin{enumerate}
\item extract the coefficient of $y^{n}$ on the right-hand side and multiply it by $n!,$ store this as a variable $P;$
\item write out the $n$th Bell polynomial in $a_{1}, \ldots, a_{n-1}, 0$ (where $a_{1}, \ldots, a_{n-1}$ are replaced by their expressions as functions of $\partial, k, s$ and $x$), store this as a variable $Q;$
\item calculate the difference $P-Q;$ the resulting polynomial in $\partial, k, s, x$ is $a_{n}.$ 
\end{enumerate}
\end{algorithm}

\begin{example}
In the case where $S = \mathbb{P}^{2}$ and $\mathscr{L}$ is the line bundle $\mathscr{O}_{\mathbb{P}^{2}}(d),$ the Chern numbers are $\mathscr{L}^{2} = d^{2}, \mathscr{LK}_{S} = -3d, \mathscr{K}_{S}^{2} = 9$ and $c_{2}(S) = 3.$ Applying the algorithm, we may calculate the polynomials $a_{i}(d) := a_{i}(d^{2},-3d,9,3)$ for $1 \leq i \leq 11.$ They are collected in the table above. The polynomials $\widetilde{a}_{i}$ are obtained by dividing $a_{i}$ by $(-1)^{i-1}(i-1)!.$ \hfill $\blacksquare$
\footnotesize
\begin{center}
\begin{table}
\begin{tabular}{|l|l|}
\hline
$a_{1}(d) =$ & $3 d^2 - 6 d + 3$ \\
\hline
$a_{2}(d) =$ & $-42 d^2 + 117 d - 75$ \\
\hline
$a_{3}(d) =$ & $1380 d^2 - 4728 d + 3798$\\
\hline
$a_{4}(d) =$ & $-72360 d^2 + 287010 d - 271242$\\
\hline
$a_{5}(d) =$ & $5225472 d^2 - 23175504 d + 24763752$\\
\hline
$a_{6}(d) =$ & $-481239360 d^2 + 2334195360 d - 2748951000$\\
\hline
$a_{7}(d) =$ & $53917151040 d^2 - 281685755520 d + 359332109280$\\
\hline
$a_{8}(d) =$ & $-7118400139200 d^2 + 39618359640720 d - 54066876993360$\\
\hline
$a_{9}(d) =$ & $1082298739737600 d^2 - 6363972305913600 d + 9205355815931520$\\
\hline
$a_{10}(d) =$ & $-186244876934645760 d^2 + 1149534771370191360 - 1749876383871100800$\\
\hline
$a_{11}(d) =$ & $35785074342095769600 d^2 - 230648648059354291200 d + 367408987455048537600$\\
\hline
\hline
$\widetilde{a}_{1}(d) =$ & $3 d^2 - 6 d + 3$\\
\hline
$\widetilde{a}_{2}(d) =$ & $42 d^2 - 117 d + 75$\\
\hline
$\widetilde{a}_{3}(d) =$ & $690 d^2 - 2364 d + 1899$\\
\hline
$\widetilde{a}_{4}(d) =$ & $12060 d^2 - 47835 d + 45207$\\
\hline
$\widetilde{a}_{5}(d) =$ & $217728 d^2 - 965646 d + 1031823$\\
\hline
$\widetilde{a}_{6}(d) =$ & $4010328 d^2 - 19451628 d + 22907925$\\
\hline
$\widetilde{a}_{7}(d) =$ & $74884932 d^2 - 391230216 d + 499072374$\\
\hline
$\widetilde{a}_{8}(d) =$ & $1412380980 d^2 - 7860785643 d + 10727554959$\\
\hline
$\widetilde{a}_{9}(d) =$ & $26842726680 d^2 - 157836614730 d + 228307435911$\\
\hline
$\widetilde{a}_{10}(d) =$ & $513240952752 d^2 - 3167809665372 d + 4822190211285$\\
\hline
$\widetilde{a}_{11}(d) =$ & $9861407170992 d^2 - 63560584231524 d + 101248067530602$\\
\hline
\end{tabular}
\caption{The polynomials $a_{i}(d)$ and $\widetilde{a}_{i}(d).$}
\end{table}
\end{center}
\normalsize
\end{example}

\section{Some general remarks on polynomial validity and the $a_{i}$}
When enumerating $r$-nodal curves, there are different approaches depending on how large $r$ is compared to the ampleness of the curve system we consider. When $r$ is sufficiently small, the answers are given by a universal polynomial in $\partial,k,s$ and $x.$ If $r$ becomes too big, we leave the domain of polynomial expressions. This is mirrored in Tzeng's Theorem 1.1 in \cite{Tzeng} by the requirement on $\mathscr{L}$ to be $(5r-1)$-very ample, and in Kool--Shende--Thomas' Theorem 4.1 in \cite{KST}, by the requirement on $\mathscr{L}$ to be $r$-very ample.

So when $S$ and $\mathscr{L}$ are given, there is a specific threshold value $m(S,\mathscr{L})$ which ensures that whenever $r \leq m(S,\mathscr{L}),$ the locus of non-reduced curves and curves with other pathologies has too small dimensions to intervene in the calculations we are doing. In the case of plane curves of degree $d,$ Block shows that for $r \leq d$ we have a polynomial expression for $N_{r}(d)$ (cf. his \cite[Theorem 1.3]{Blo}). We thus have $m(\mathbb{P}^{2},d) \geq d.$ A better threshold, $m(\mathbb{P}^{2},d) = 2d-2,$ is conjectured by G\"ottsche (cf. \cite[Conjecture 4.1]{Got}); indeed, the locus of non-reduced curves of degree $d$ has codimension $2d-1$ in $|\mathscr{O}_{\mathbb{P}^{2}}(d)|,$ so if $r \leq 2d-2,$ the locus of non-reduced curves has smaller dimension than the locus of $r$-nodal curves.

This threshold value is clearly sharp; we cannot have $m(\mathbb{P}^{2},d) > 2d-2.$ For instance, consider the following example (due to Choi, cf. \cite[Remark 3.12]{Choi}): There are $N_{9}(5) = 6930$ 9-nodal curves of degree 5 passing through 11 points in general position in $\mathbb{P}^{2};$ use, for instance, the Caporaso--Harris recursive formula. However, the polynomial $T_{9},$ obtained by whatever method one prefers (for example Block's algorithm in \cite{Blo}; the polynomial figures in his Appendix A), takes the value $-1276110$ in 5, illustrating the sharpness.

As mentioned, Block confirms the validity of the threshold value $r \leq 2d-2$ when $r \leq 14$ \cite[Proposition 1.4]{Blo}. Also, in \cite{KP2}, Kleiman and Piene consider the closure of the set $Y(rA_{1})$ parametrizing $r$-nodal curves of degree $d,$ which is the support of a natural nonnegative cycle $U(r).$ Provided $r \leq 2d-2,$ they show \cite[Theorem 3.1]{KP2} that the degree of the rational equivalence class $[U(r)]$ is equal to the number of $r$-nodal curves of degree $d,$ passing through appropriately many points in $\mathbb{P}^{2}.$ However, they are also restricted by the requirement $r \leq 8$ to ensure polynomiality of this degree. To conclude that $m(\mathbb{P}^{2},d)$ is indeed $2d-2$ with this approach, one would therefore have to extend the polynomiality of deg $[U(r)]$ beyond $r \leq 8.$

Next, we turn to the polynomials $a_{i}(d) = a_{i}(d^{2},-3d,9,3).$ Since $a_{i}$ is linear in its four variables, $a_{i}(d)$ is a quadratic polynomial in $d.$ Also, the Chern numbers $-3d,9$ and 3 are all divisible by 3. Thus the following statement is a consequence of Eq. (\ref{factorial}) and the claim which follows it.
\begin{proposition}
\label{division}
In the case of the complex projective plane, the polynomials $a_{i}(d)$ have the form $(i-1)!(\alpha_{i}d^{2} + \beta_{i}d + \gamma_{i}),$ where $\alpha_{i}, \beta_{i}$ and $\gamma_{i}$ are integers and the latter two are divisible by 3.
\end{proposition}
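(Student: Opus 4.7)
The plan is a direct specialization of equation \eqref{factorial} to $(\mathbb{P}^{2}, \mathscr{O}_{\mathbb{P}^{2}}(d))$. Recall that equation \eqref{factorial} displays $a_{i}(t,u,v,w)$ as $(i-1)!$ times a linear form in $t, u, v, w$ with coefficients $ib_{i}^{(1)}, ib_{i}^{(2)}, ib_{i}^{(3)}, ib_{i}^{(4)}$, and the paragraph following it---invoking the G\"ottsche--Yau--Zaslow formula together with Lemma \ref{geom_input}---establishes that each $ib_{i}^{(j)}$ is an integer, being the $q^{i-1}$-coefficient of the integer-coefficient power series $\frac{d}{dq} \log A_{j}(q)$.

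First I would substitute the Chern numbers $(\partial, k, s, x) = (d^{2}, -3d, 9, 3)$ into \eqref{factorial}, obtaining
\begin{equation*}
a_{i}(d) = (i-1)! \bigl( ib_{i}^{(1)} d^{2} - 3 i b_{i}^{(2)} d + 9 i b_{i}^{(3)} + 3 i b_{i}^{(4)} \bigr).
\end{equation*}
Next I would read off
\begin{equation*}
\alpha_{i} := ib_{i}^{(1)}, \qquad \beta_{i} := -3 i b_{i}^{(2)}, \qquad \gamma_{i} := 9 i b_{i}^{(3)} + 3 i b_{i}^{(4)}.
\end{equation*}
All three are integers by the integrality claim above, while $\beta_{i}$ and $\gamma_{i}$ are manifestly divisible by 3, inheriting the common factor of 3 that is already present in the Chern numbers $k$, $s$, and $x$.

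There is essentially no obstacle: the statement is a formal consequence of two facts already in hand, namely the integrality of the universal coefficients $ib_{i}^{(j)}$ (proved in Section 2) and the arithmetic coincidence that $k$, $s$, and $x$ are all multiples of 3 for $(\mathbb{P}^{2}, \mathscr{O}_{\mathbb{P}^{2}}(d))$. The only place where a non-trivial input is required is the integrality claim itself, which is why the proposition is explicitly attributed to equation \eqref{factorial} and the claim immediately following.
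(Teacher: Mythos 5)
Your proof is correct and is exactly the argument the paper intends: the paper states the proposition as an immediate consequence of Eq.\ (\ref{factorial}), the integrality of the coefficients $ib_{i}^{(j)}$ established just after it, and the observation that $k=-3d$, $s=9$, $x=3$ are all divisible by $3$. Your explicit identification of $\alpha_{i}$, $\beta_{i}$, $\gamma_{i}$ merely spells out what the paper leaves implicit.
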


In fact, we have more than this. We will show that the integer $\alpha_{i}$ is also divisible by 3. This is not a property of the projective plane, but a universal property of the polynomials $a_{i},$ and we will use the G\"ottsche--Yau--Zaslow formula in the case of Abelian surfaces to prove it. 


We also need two small lemmas in our proof.

\begin{lemma}
\label{pow_series}
Let $r \geq 1$ be a positive integer. In the ring of formal power series, $\left(\sum_{n=0}^{\infty}b_{n}q^{n}\right)^{r} = \sum_{n=0}^{\infty}c_{n}q^{n},$ where the coefficients $c_{n}$ satisfy the following recursive formula (provided $b_{0} \neq 0$): $c_{0} = b_{0}^{r},$ and for all $n \geq 1,$
\begin{equation}
c_{n} = \frac{1}{nb_{0}} \sum_{l=1}^{n} (lr-n+l)b_{l}c_{n-l}.
\end{equation}
\end{lemma}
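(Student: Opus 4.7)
The plan is to avoid induction-heavy bookkeeping by using the standard differential trick: if $f(q) := \sum_{n \geq 0} b_n q^n$ and $g(q) := f(q)^r = \sum_{n \geq 0} c_n q^n$, then differentiating gives $g'(q) = r f(q)^{r-1} f'(q)$, and multiplying by $f$ eliminates the awkward $(r-1)$-th power, producing the clean identity
\[
f(q)\, g'(q) = r\, g(q)\, f'(q).
\]
This is the key algebraic observation; everything after is coefficient extraction.

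Next, I would compare coefficients of $q^{n-1}$ on both sides. The left-hand side contributes $\sum_{l=0}^{n-1}(n-l)\, b_l\, c_{n-l}$ (after reindexing the Cauchy product so that the differentiated factor is indexed by $n-l$), while the right-hand side contributes $r \sum_{l=1}^{n} l\, b_l\, c_{n-l}$. Setting these equal and isolating the $l=0$ term on the left, which is $n b_0 c_n$, gives
\[
n b_0 c_n \;=\; r \sum_{l=1}^{n} l\, b_l\, c_{n-l} \;-\; \sum_{l=1}^{n-1} (n-l)\, b_l\, c_{n-l}.
\]
Combining the two sums (noting that the would-be $l=n$ term in the second sum vanishes since its coefficient is $n-n=0$, so we may extend its range to $l=n$ harmlessly) collapses the right side into $\sum_{l=1}^{n}(lr - n + l)\, b_l\, c_{n-l}$. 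Dividing through by $n b_0$, which is legitimate because $b_0 \neq 0$ in the hypothesis and $n \geq 1$, yields the claimed formula. The base case $c_0 = b_0^r$ is immediate from $g(0) = f(0)^r$.

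There is essentially no obstacle; the only care needed is the reindexing of the Cauchy product for $f g'$ so that the summation variable matches that of $r g f'$, and verifying that the two sums combine correctly into a single sum from $l=1$ to $l=n$ with coefficient $lr - n + l$. One sanity check worth doing is the $n=1$ case: the formula gives $c_1 = \frac{1}{b_0}(r \cdot b_1 \cdot c_0) = r b_0^{r-1} b_1$, which agrees with $\tfrac{d}{dq}(f^r)\big|_{q=0}$, confirming the indexing.
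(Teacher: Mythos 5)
Your proof is correct and complete. The identity $f g' = r g f'$ is the right starting point, the coefficient extraction on both sides is accurate (the left-hand side of $q^{n-1}$ in $fg'$ is $\sum_{l=0}^{n-1}(n-l)b_l c_{n-l}$ and the right-hand side is $r\sum_{l=1}^{n} l b_l c_{n-l}$), and the observation that the $l=n$ term may be harmlessly adjoined to the second sum because its coefficient $n-n$ vanishes is exactly the detail that makes the two sums merge into $\sum_{l=1}^{n}(lr-n+l)b_l c_{n-l}$. For comparison: the paper does not prove this lemma at all; it simply refers the reader to von Holdt's article, which treats the more general case of rational exponents $r$. Your argument is the classical one (it is the proof of what is often called J.C.P. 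Miller's recurrence), it works verbatim for any rational or even formal exponent $r$ once $f^r$ is defined via $\exp(r\log f)$ (since the relation $g'/g = r f'/f$ is then the definition), and it has the advantage of being self-contained, so in this instance your write-up supplies something the paper leaves to a citation.
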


\begin{proof}
See \cite{Hol} for a proof (which deals with the slightly more general case of rational exponents).
\end{proof}

Recall that for each $n \geq 1,$ $\sigma(n)$ denotes the sum of the positive divisors of $n,$ including 1 and itself.
\begin{lemma}
\label{sum_of_divisors}
Let $n \equiv -1[3]$ be a positive integer. Then $\sigma(n)$ is divisible by 3.
\end{lemma}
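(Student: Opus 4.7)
The plan is to prove the divisibility of $\sigma(n)$ by pairing divisors via the involution $d \mapsto n/d$ and checking the residues modulo $3.$

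First I would observe that the hypothesis $n \equiv -1 \pmod 3$ forces $\gcd(n,3) = 1,$ so every divisor $d$ of $n$ satisfies $d \equiv 1$ or $d \equiv 2 \pmod 3.$ Moreover, since $n \equiv 2 \pmod 3$ and squares in $\mathbb{Z}/3\mathbb{Z}$ are only $0$ or $1,$ the integer $n$ cannot be a perfect square. Consequently the involution
\begin{equation}
\iota \colon \{d : d \mid n\} \longrightarrow \{d : d \mid n\}, \qquad d \longmapsto n/d,
\end{equation}
has no fixed points and partitions the set of divisors into disjoint pairs $\{d, n/d\}.$

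Next, within each such pair we have $d \cdot (n/d) = n \equiv 2 \pmod 3.$ Since $2$ factors in $(\mathbb{Z}/3\mathbb{Z})^{\times}$ only as $1 \cdot 2,$ the residues of $d$ and $n/d$ modulo $3$ are exactly $1$ and $2$ in some order. Thus $d + n/d \equiv 1 + 2 \equiv 0 \pmod 3.$

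Summing over the pairs yields
\begin{equation}
\sigma(n) = \sum_{d \mid n} d = \sum_{\{d,\, n/d\}} \bigl(d + n/d\bigr) \equiv 0 \pmod 3,
\end{equation}
which is the required divisibility. There is no real obstacle here; the only point that needs a word of care is ruling out the fixed point of the involution, which is handled by the quadratic-residue observation that $n \equiv 2 \pmod 3$ excludes $n$ from being a square.
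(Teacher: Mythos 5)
Your proof is correct, and it takes a genuinely different route from the paper. The paper uses the multiplicativity of $\sigma$: writing $n = \prod p_i^{e_i}$ and $\sigma(n) = \prod (1 + p_i + \ldots + p_i^{e_i})$, it argues that since $n \equiv -1 \pmod 3$ some prime $p \equiv -1 \pmod 3$ must occur with odd exponent $e$, whence $1 + p + \ldots + p^e \equiv 1 + (-1) \equiv 0 \pmod 3$ and that single factor supplies the divisibility. You instead avoid the product formula entirely and work directly with the divisor set, pairing $d$ with $n/d$; the two key observations --- that $n \equiv 2 \pmod 3$ is not a square (so the involution is fixed-point free) and that a product of two units equal to $2$ in $\mathbb{Z}/3\mathbb{Z}$ forces the residues $\{1,2\}$ --- are both right, and each pair then contributes $0$ modulo $3$. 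Your argument is arguably more self-contained, needing no formula for $\sigma$ and no case analysis on prime factorizations, while the paper's version generalizes more transparently to statements about individual factors $1 + p + \ldots + p^e$ of $\sigma(n)$. Either proof serves the role the lemma plays later in the paper.
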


\begin{proof}
Write $n = \prod_{i=1}^{r}p_{i}^{e_{i}}$ for distinct prime numbers $p_{i}$ and positive integers $e_{i}.$ A well-known identity gives us $\sigma(n) = \prod_{i=1}^{r}(1 + p_{i} + \ldots + p_{i}^{e_{i}}).$ We must show that at least one of these factors is divisible by 3. Since $n \equiv -1[3],$ at least one of the $p_{i}$ must be $\equiv -1[3],$ and it must appear with an odd exponent $e_{i},$ otherwise $p_{i}^{e_{i}} \equiv 1[3].$ Denote it by $p$ and the exponent by $e.$ Then $p + p^{2} + \ldots + p^{e} \equiv -1[3],$ and the factor $1 + p + \ldots + p^{e}$ is divisible by 3, just as we wanted.
\end{proof}

We may prove the divisibility by 3 of $\alpha_{i}$ in $a_{i}(d) = (i-1)!(\alpha_{i}d^2+\beta_{i}d+\gamma_{i}).$

\begin{proposition}
\label{alpha}
For each $i \geq 1,$ the integer $\alpha_{i}$ is divisible by 3.
\end{proposition}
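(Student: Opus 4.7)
The plan is to specialize the G\"ottsche--Yau--Zaslow formula (\ref{gyz}) to Abelian surfaces, where the constraints $k = s = x = 0$ leave $\partial = \mathscr{L}^{2}$ as the only active parameter, so as to isolate the universal power series $A_{1}(q)$ from (\ref{gen_function}). One then extracts $\alpha_{n}$ by Lagrange--B\"urmann inversion and reads off the divisibility by 3 using the arithmetic Lemma \ref{sum_of_divisors}.

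For an Abelian surface the tangent bundle is trivial, so $\mathscr{K}_{S} = 0$ and $c_{2}(S) = 0$, yielding $k = s = x = 0$; Noether's formula and Riemann--Roch give $\chi(\mathscr{O}_{S}) = 0$ and $\chi(\mathscr{L}) = \partial/2$. Substitution into (\ref{gyz}) collapses the right-hand side to $(DG_{2}(\tau)/q)^{\partial/2}$, while (\ref{gen_function}) evaluated at $q \mapsto DG_{2}(\tau)$ reads $A_{1}(DG_{2})^{\partial}$. Treating $\partial$ as a formal parameter (legitimate either because GYZ is a universal identity, or because $\partial$ already takes infinitely many values among polarized Abelian surfaces), one concludes the formal power series identity
\begin{displaymath}
A_{1}(y)^{2} = \frac{y}{g(y)},
\end{displaymath}
where $y = f(q) := DG_{2}(q)$ and $q = g(y)$ is its formal inverse.

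Applying Lagrange--B\"urmann inversion to $\Phi(q) := \log(f(q)/q)$, which satisfies $\Phi(0) = 0$ and $\Phi(g(y)) = 2\log A_{1}(y)$, and writing $\log A_{1}(y) = \sum_{l \geq 1} b_{l}^{(1)} y^{l}$ so that $\alpha_{n} = n\, b_{n}^{(1)}$ by (\ref{factorial}), one obtains
\begin{displaymath}
\alpha_{n} = \tfrac{1}{2}[q^{n-1}]\,\Phi'(q)\bigl(q/f(q)\bigr)^{n}.
\end{displaymath}
A direct computation using $\Phi'(q) = (qf'(q) - f(q))/(qf(q))$ together with $f(q) = qF(q)$ and $qf'(q) - f(q) = q^{2}G(q)$ simplifies this to
\begin{displaymath}
\alpha_{n} = \tfrac{1}{2}[q^{n-1}]\,\frac{G(q)}{F(q)^{n+1}},
\end{displaymath}
where $F(q) := \sum_{k \geq 1} k\sigma(k)q^{k-1} \in 1 + q\mathbb{Z}[[q]]$ and $G(q) := \sum_{k \geq 2} k(k-1)\sigma(k)q^{k-2} \in \mathbb{Z}[[q]]$.

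The crux is the observation that every coefficient of $G(q)$ is divisible by 3: if $k \equiv 0$ or $1 \pmod{3}$ then $3 \mid k(k-1)$, while if $k \equiv 2 \pmod{3}$ then $3 \mid \sigma(k)$ by Lemma \ref{sum_of_divisors}. Writing $G(q) = 3\,\widetilde{G}(q)$ with $\widetilde{G} \in \mathbb{Z}[[q]]$, one has $\alpha_{n} = \tfrac{3}{2} M_{n}$, where $M_{n} := [q^{n-1}]\,\widetilde{G}(q)/F(q)^{n+1}$ is an integer (since $F(0) = 1$). As Proposition \ref{division} guarantees $\alpha_{n} \in \mathbb{Z}$, $M_{n}$ must be even, and hence $\alpha_{n} = 3(M_{n}/2)$ is a multiple of 3. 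The only genuinely delicate ingredient is the arithmetic input about $G(q)$ modulo 3; everything else is essentially bookkeeping around Lagrange inversion and the formal-parameter specialization of GYZ to Abelian surfaces.
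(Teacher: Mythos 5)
Your proof is correct, and while it starts from the same place as the paper --- specializing the G\"ottsche--Yau--Zaslow formula (\ref{gyz}) to Abelian surfaces, reducing to the identity $2\log A_{1}(y)=\log\bigl(y/g(y)\bigr)$, and feeding in the same arithmetic input (Lemma \ref{sum_of_divisors}, equivalently the divisibility of $k(k-1)\sigma(k)$ by $6$) --- the coefficient-extraction step is genuinely different. The paper differentiates the logarithmic identity, expands both sides with Lemma \ref{pow_series} and Cauchy products, and arrives at the recursion (\ref{eqn:recursion}) for $D_{n}$, from which divisibility by $3$ follows by induction. You instead apply Lagrange--B\"urmann inversion to $\Phi(q)=\log(f(q)/q)$ and obtain the closed form $\alpha_{n}=\tfrac{1}{2}[q^{n-1}]\,G(q)/F(q)^{n+1}$, from which the divisibility is read off in one step with no induction. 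Your route is shorter and more transparent; the paper's recursion has the side benefit that it is quoted elsewhere (just before Theorem \ref{cor_bell}) as the source of the integrality of the $\partial$-coefficient needed for Lemma \ref{geom_input}.

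That last point is the one wrinkle in your argument. You invoke Proposition \ref{division} to assert $\alpha_{n}\in\mathbb{Z}$ and then conclude that $M_{n}$ is even. But the paper's justification of Proposition \ref{division} (via the claim following Eq.~(\ref{factorial})) itself forward-references the recursion (\ref{eqn:recursion}) to establish that the $\partial$-coefficient is an integer --- the very computation your proof replaces. So as written there is a soft circularity. It is easily repaired inside your own framework: every coefficient $k(k-1)\sigma(k)$ of $G(q)$ is divisible by $2$ as well as by $3$ (the factor $k(k-1)$ is always even), so $G=6G_{6}$ with $G_{6}\in\mathbb{Z}[[q]]$ and $\alpha_{n}=3\,[q^{n-1}]\,G_{6}(q)/F(q)^{n+1}\in 3\mathbb{Z}$ directly, with no appeal to Proposition \ref{division}. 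This stronger observation also recovers, in closed form, the integrality statement that the paper extracts from (\ref{eqn:recursion}).
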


\begin{proof}
For convenience, write $a_{i} = (-1)^{i-1}(i-1)!\left(D_{i}\partial + E_{i}k + F_{i}s + G_{i}x\right)$ where the coefficients are integers. We want to show that $D_{i}$ is divisible by 3. Since the polynomials $a_{i}$ are universal, it suffices to study a particular case, namely the one of Abelian surfaces, where the first and second Chern classes are trivial. So $k = s = x = 0,$ and we are left with $a_{i} = (-1)^{i-1}(i-1)!D_{i}\partial.$ Writing out the G\"ottsche--Yau--Zaslow formula in this case, using $Z_{r}(\partial,k,s,x) = P_{r}(a_{1},\ldots,a_{r}))/r!,$ we get the equality of power series below.
\begin{equation}
\sum_{r=0}^{\infty} \frac{P_{r}(a_{1}, \ldots, a_{r})}{r!} \left( \sum_{n=1}^{\infty} n\sigma(n)q^{n} \right)^{r} = \left( \sum_{n=1}^{\infty} n\sigma(n)q^{n-1}\right)^{\partial/2}.
\end{equation}
Because of the definition of the Bell polynomials by means of the exponential of the generating function of the $a_{i} = (-1)^{i-1}(i-1)!D_{i}\partial$ (see Definition \ref{bell}), the above is equivalent to the equality
\begin{equation}
\sum_{r=1}^{\infty} \frac{2(-1)^{r-1}D_{r}}{r} \left( \sum_{n=1}^{\infty} n\sigma(n)q^{n} \right)^{r} = \log \left( \sum_{n=1}^{\infty} n\sigma(n)q^{n-1}\right).
\end{equation}
Using the power series expansion of the logarithm, we may write this as
\begin{displaymath}
\sum_{r=1}^{\infty} \frac{2(-1)^{r-1}D_{r}}{r}\left(\sum_{n=1}^{\infty}n\sigma(n)q^{n}\right)^{r} = \sum_{r=1}^{\infty}\frac{(-1)^{r-1}}{r} \left(\sum_{n=1}^{\infty} (n+1)\sigma(n+1)q^{n}\right)^{r}.
\end{displaymath}
At this point, it greatly simplifies matters if we differentiate formally, giving:
\begin{eqnarray*}
& & \sum_{r=1}^{\infty}2(-1)^{r-1}D_{r} \left(\sum_{n=1}^{\infty} n\sigma(n)q^{n}\right)^{r-1}\left(\sum_{n=1}^{\infty}n^{2}\sigma(n)q^{n-1}\right) \\
& = & \sum_{r=1}^{\infty}(-1)^{r-1}\left(\sum_{n=1}^{\infty}(n+1)\sigma(n+1)q^{n}\right)^{r-1}\left(\sum_{n=1}^{\infty}n(n+1)\sigma(n+1)q^{n-1}\right).
\end{eqnarray*}
Rewrite this slightly as:
\begin{eqnarray*}
& & \sum_{r=0}^{\infty}2(-1)^{r}D_{r+1} q^{r}\left(\sum_{n=0}^{\infty} (n+1)\sigma(n+1)q^{n}\right)^{r}\left(\sum_{n=0}^{\infty}(n+1)^{2}\sigma(n+1)q^{n}\right) \\
& = & \sum_{r=0}^{\infty}(-1)^{r}q^{r}\left(\sum_{n=0}^{\infty}(n+2)\sigma(n+2)q^{n}\right)^{r}\left(\sum_{n=0}^{\infty}(n+1)(n+2)\sigma(n+2)q^{n}\right).
\end{eqnarray*}

Note that $\sum_{n=0}^{\infty}(n+1)\sigma(n+1)q^{n}$ is a power series with constant term equal to 1, and $\sum_{n=0}^{\infty}(n+2)\sigma(n+2)q^{n}$ is a power series with constant term equal to 6. Now, using Lemma \ref{pow_series}, we may expand the left-hand side above:

\begin{equation}
\label{expand_left}
\sum_{r=0}^{\infty}2(-1)^{r}D_{r+1}q^{r} \left(\sum_{n=0}^{\infty}d_{r}(n)q^{n}\right)\left(\sum_{n=0}^{\infty}(n+1)^{2}\sigma(n+1)q^{n}\right)
\end{equation}
with the recursive formula

\vspace{3mm}

$\left\{\begin{array}{lll}
d_{r}(0) & = & 1 \\
d_{r}(n) & = & \frac{1}{n}\sum_{k=1}^{n} (kr-n+k)(k+1)\sigma(k+1)d_{r}(n-k).
\end{array}\right.$

\vspace{3mm}

By the standard formula for the Cauchy product of two series, (\ref{expand_left}) is a power series of the form
\begin{equation}
\sum_{r=0}^{\infty}2(-1)^{r}D_{r+1}q^{r} \left(\sum_{n=0}^{\infty}v_{r}(n)q^{n}\right),
\end{equation}
where $v_{r}(n) = \sum_{k=0}^{n}(k+1)^{2}\sigma(k+1)d_{r}(n-k).$ In the same way, the right-hand side expands to:
\begin{equation}
\label{expand_right}
\sum_{r=0}^{\infty}(-1)^{r}q^{r} \left(\sum_{n=0}^{\infty}e_{r}(n)q^{n}\right)\left(\sum_{n=0}^{\infty}(m+1)(m+2)\sigma(m+2)q^{m}\right)
\end{equation}
with the recursive formula

\vspace{3mm}

$\left\{\begin{array}{lll}
e_{r}(0) & = & 6^{r} \\
e_{r}(n) & = & \frac{1}{6n}\sum_{k=1}^{n} (kr-n+k)(k+2)\sigma(k+2)e_{r}(n-k).
\end{array}\right.$

\vspace{3mm}

\noindent Expanding the Cauchy product in (\ref{expand_right}), this gives:
\begin{equation}
\sum_{r=0}^{\infty}(-1)^{r}q^{r}\left(\sum_{n=0}^{\infty}w_{r}(n)q^{n}\right),
\end{equation}
where $w_{r}(n) = \sum_{k=0}^{n}(k+1)(k+2)\sigma(k+2)e_{r}(n-k).$ Thus, we have the following equality for all $m \geq 0:$
\begin{displaymath}
\sum_{r+n = m}2(-1)^{r}D_{r+1}v_{r}(n) = \sum_{r+n = m}(-1)^{r}w_{r}(n), \textnormal{ with } r \geq 0, n \geq 0.
\end{displaymath}
This can be written somewhat more simply as
\begin{equation}
\sum_{r = 0}^{m} 2(-1)^{r}D_{r+1}v_{r}(m-r) = \sum_{r = 0}^{m} (-1)^{r}w_{r}(m-r).
\end{equation}
Putting $m=0$ gives $2 \cdot 1 \cdot D_{1} \cdot v_{0}(0) = w_{0}(0),$ hence
\begin{displaymath}
D_{1} = \frac{w_{0}(0)}{2v_{0}(0)} = \frac{2\sigma(2)}{2} = \sigma(2) = 3.
\end{displaymath}
Furthermore, for all $m \geq 1$ the following equality holds:
\begin{displaymath}
2(-1)^{m}D_{m+1}v_{m}(0) = (-1)^{m}w_{m}(0) + \sum_{r=0}^{m-1}(-1)^{r} \Bigl(w_{r}(m-r)-2v_{r}(m-r)D_{r+1}\Bigr).
\end{displaymath}
Using that $v_{m}(0) = 1$ and $w_{m}(0) = 6^{m+1},$ this gives us the following recursive formula: $D_{1} = 3$ and for all $n \geq 2,$
\begin{equation}\label{eqn:recursion}
\boxed{D_{n} = \frac{6^{n}}{2} + \sum_{k=1}^{n-1} (-1)^{n-k}\left(\frac{w_{k-1}(n-k)}{2} - v_{k-1}(n-k)D_{k} \right).}
\end{equation}

We may now proceed recursively to prove that each $D_{i}$ is divisible by 3. It is clearly true for $i=1.$ Assuming validity of the assertion up to $i=n-1$ for some $n \geq 2,$ the above formula shows that it is sufficient to know that each of the integers $w_{k-1}(n-k)$ is divisible by 6 for $1 \leq k \leq n-1.$ Recall the definition of $w_{r}(n) = \sum_{k=0}^{n}(k+1)(k+2)\sigma(k+2)e_{r}(n-k),$ where the $e_{r}(n-k)$ are integers. Thus it is sufficient to show that all integers $M_{n}$ of the form $n(n+1)\sigma(n+1)$ are divisible by 6. If $n+1 \equiv 0[3]$ and is even, it is divisible by 6 and so is $M_{n}.$ If $n+1 \equiv 0[3]$ and is odd, then $n$ is even, and so $n(n+1)$ is divisible by 6 and the same goes for $M_{n}.$ If $n+1 \equiv 1 [3]$ then $n \equiv 0[3]$ and we use the same argument. Using Lemma \ref{sum_of_divisors}, we also get the divisibility when $n+1 \equiv 2[3],$ and we are done.
\end{proof}

\section{Shape conjectures of Di Francesco--Itzykson and G\"ottsche}
In this section, we prove Conjecture \ref{plane_shape} on the shape of the $\mathbb{P}^{2}$-polynomials $T_{r}(d),$ given by $P_{r}(a_{1}(d),\ldots, a_{r}(d))/r!$ from Theorem \ref{cor_bell}. Since $a_{i}(d)$ is a quadratic polynomial in $d,$ while $P_{r}$ is of degree $r$ in $a_{1}, \ldots, a_{r},$ it is clear that the node polynomial $T_{r}(d)$ is a polynomial in $d$ of degree $2r.$ From the observation of the $T_{r}(d)$ for low values of $r,$ Di Francesco and Itzykson originally conjectured (Remark b) following Proposition 2 in \cite{DI}) that $T_{r}(d)$ has the following form:
\begin{equation}
T_{r}(d) = \frac{3^{r}}{r!}\Bigl(q_{0}(r)d^{2r} + q_{1}(r)d^{2r-1} + \ldots\Bigr),
\end{equation}
where the $q_{\mu}$ are polynomials in $r$ of degree $\mu,$ more precisely the following (once the conjecture is stated it is easy to calculate what the polynomials must be).

\begin{eqnarray*}
q_{0}(r) & = & 1  \\
q_{1}(r) & = & -2r \\
q_{2}(r) & = & -\frac{1}{3} r(r-4)\\
q_{3}(r) & = & \frac{1}{6}r(r-1)(20r-13) \\
q_{4}(r) & = & -\frac{1}{54} r(r-1)(69r^{2}-85r+92) \\
q_{5}(r) & = & -\frac{1}{270} r(r-1)(r-2)(702r^{2}-629r - 286) \\
q_{6}(r) & = & \frac{1}{3240} r(r-1)(r-2)(6028r^{3}-15476r^{2} + 11701r + 4425) \\
\end{eqnarray*}
G\"ottsche noted (see \cite[Remark 4.2]{Got}) some further properties which seem to be general for the polynomials $q_{\mu},$ namely that they always take the form
\begin{equation}
q_{\mu}(r) = \frac{1}{\mu!3^{\lfloor \mu/2 \rfloor}} \frac{r!}{(r- \lceil \mu/2 \rceil)!} Q_{\mu}(r),
\end{equation}
$Q_{\mu}(r)$ being a polynomial with integer coefficients and degree $\lfloor \mu/2 \rfloor$ (note that the denominator $\mu!3^{\lfloor \mu/2 \rfloor}$ is not necessarily minimal). This is confirmed by the calculations of Block up to $r=14,$ and in the following theorems, we show that this is general. Let $\mathbb{Q}[X]_{\mu}$ denote the set of polynomials of degree $\mu \geq 0$ with rational coefficients. $m(\mathbb{P}^{2},d)$ denotes the threshold value for polynomiality:

\begin{theorem}
\label{universality}
There exist polynomials $q_{\mu} \in \mathbb{Q}[X]_{\mu}$ such that for all $r \leq m(\mathbb{P}^{2},d),$ the number of $r$-nodal curves of degree $d$ passing through $d(d+3)/2 - r$ points in general position in $\mathbb{P}^{2}$ is given by the polynomial in $d$
\begin{displaymath}
T_{r}(d) = \frac{3^{r}}{r!}\sum_{\mu=0}^{2r} q_{\mu}(r)d^{2r-\mu}.
\end{displaymath}
\end{theorem}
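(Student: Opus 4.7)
The plan is to expand $T_r(d) = P_r(a_1(d),\ldots,a_r(d))/r!$ (Theorem \ref{cor_bell}) via the standard partition formula for Bell polynomials,
\begin{displaymath}
T_r(d) = \sum_{\lambda \vdash r} \prod_{i \geq 1} \frac{a_i(d)^{j_i}}{j_i!\,(i!)^{j_i}},
\end{displaymath}
where $j_i = j_i(\lambda)$ is the multiplicity of $i$ in $\lambda,$ and then to exploit the structural results of Section 3. By Propositions \ref{division} and \ref{alpha}, we may write $a_i(d) = 3(i-1)!\,\tilde{c}_i(d)$ with $\tilde{c}_i(d) \in \mathbb{Z}[d]$ quadratic; moreover the table shows $\tilde{c}_1(d) = (d-1)^2.$ Substituting and using $(i!)^{j_i}/((i-1)!)^{j_i} = i^{j_i}$ gives
\begin{displaymath}
T_r(d) = \sum_{\lambda \vdash r} \frac{3^{\ell(\lambda)}}{\prod_i j_i!\, i^{j_i}}\,(d-1)^{2 j_1}\prod_{i \geq 2} \tilde{c}_i(d)^{j_i},
\end{displaymath}
in which the $3^r$ of the target formula will eventually emerge by combining $3^{\ell(\lambda)}$ with another $3^{r - \ell(\lambda)}$ introduced below.

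Next I would reparametrise the sum by the non-$1$ multiplicities $\mathbf{j}^* := (j_2, j_3, \ldots),$ setting $k := \sum_{i \geq 2}(i-1) j_i$ and $j := \sum_{i \geq 2} j_i,$ so $j_1 = r - k - j$ and $\ell(\lambda) = r - k.$ Expanding $(d-1)^{2(r-k-j)}$ by the binomial theorem and writing $R_{\mathbf{j}^*}(d) := \prod_{i \geq 2} \tilde{c}_i(d)^{j_i}$ (a polynomial of degree $2j$ independent of $r$), the coefficient of $d^{2r-\mu}$ in $T_r(d)$ becomes
\begin{displaymath}
\sum_{\mathbf{j}^*} \frac{3^{r-k}}{(r-k-j)!\prod_{i \geq 2} j_i!\, i^{j_i}} \sum_{p} [d^p]R_{\mathbf{j}^*}(d)\, \binom{2(r-k-j)}{\mu + p - 2k - 2j}(-1)^{\mu+p-2k-2j}.
\end{displaymath}
Only finitely many $\mathbf{j}^*$ contribute, since $\binom{2(r-k-j)}{\mu + p - 2k - 2j} = 0$ unless $\mu + p - 2k - 2j \geq 0,$ and $p \leq 2j$ forces $k \leq \mu/2.$ Using $\tfrac{1}{(r-k-j)!} = \tfrac{1}{r!} \cdot \tfrac{r!}{(r-k-j)!}$ and $3^{r-k} = 3^r \cdot 3^{-k}$ then pulls out the universal prefactor $\tfrac{3^r}{r!}$ and isolates $q_\mu(r)$ as a finite sum over $\mathbf{j}^*$ with $k \leq \lfloor\mu/2\rfloor.$

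The degree bound now follows by a routine count: $\tfrac{r!}{(r-k-j)!}$ is a polynomial in $r$ of degree $k+j,$ and $\binom{2(r-k-j)}{\mu + p - 2k - 2j}$ is of degree $\mu + p - 2k - 2j$ in $r,$ so each summand is of degree $\mu + p - k - j \leq \mu + j - k$ in $r.$ Since every non-$1$ part of $\lambda$ is at least $2,$ one has $k \geq j,$ and therefore $\mu + j - k \leq \mu,$ yielding $q_\mu \in \mathbb{Q}[X]_{\mu}.$ The main obstacle I anticipate is not the degree count itself but the careful bookkeeping required to pull $3^r/r!$ cleanly through the partition sum while keeping track of which $\mathbf{j}^*$ survive; once that is done the result is immediate. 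If exact degree $\mu$ is required, one checks that the leading coefficient in $r$ equals $[t^{\mu}]\exp(-2t - \tfrac{7}{3}t^2),$ which via the Hermite generating function is $(-1)^\mu (7/3)^{\mu/2} H_\mu(\sqrt{3/7})/\mu!,$ and argues separately that $\sqrt{3/7}$ is not a zero of any Hermite polynomial.
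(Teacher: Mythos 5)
Your argument is correct, and it takes a genuinely different route from the paper's proof of Theorem \ref{universality}. The paper argues by induction on $\mu$: it feeds the defining recursion $P_{r+1}=\sum_k\binom{r}{k}P_{r-k}a_{k+1}$ into the coefficient functions $p_\mu(r):=[d^{2r-\mu}]P_r(a_1(d),\ldots,a_r(d))$, obtains a first-order linear difference equation $p_\mu(r+1)-3p_\mu(r)=3^rH(r)$ with $\deg H\le\mu-1$, and solves it, the factor $3^r$ arising as the homogeneous solution. You instead expand $P_r/r!$ by the partition formula, absorb the factor $3(i-1)!$ from each $a_i$ (which is exactly what Propositions \ref{division} and \ref{alpha} supply), and extract the coefficient of $d^{2r-\mu}$ directly. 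Your bookkeeping is sound: only multi-indices with $k=\sum_{i\ge2}(i-1)j_i\le\mu/2$ survive, so the sum over $\mathbf{j}^*$ is finite and independent of $r$; the factors $r!/(r-k-j)!$ and $\binom{2(r-k-j)}{\mu+p-2k-2j}$ are polynomials in $r$ of degrees $k+j$ and $\mu+p-2k-2j$ respectively; and $p\le 2j$ together with $k\ge j$ gives $\deg\le\mu$. (Note also that $r!/(r-k-j)!$ vanishes at $r=0,\ldots,k+j-1$, so your formula is uniform in $r$ even when the corresponding partitions do not exist.) Your route is in fact much closer in spirit to the paper's proof of the subsequent refinement, Theorem \ref{general_form}, which also works from the partition expansion. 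What your approach buys is a completely explicit closed formula for $q_\mu$, and in particular for its leading coefficient; what the paper's approach buys is brevity and the avoidance of binomial bookkeeping.

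One caveat, which you correctly flag but defer: the statement asserts $q_\mu\in\mathbb{Q}[X]_\mu$, which the paper defines as degree \emph{exactly} $\mu$, so the nonvanishing of the leading coefficient $[t^\mu]\exp(-2t-\tfrac{7}{3}t^2)=(-1)^\mu(7/3)^{\mu/2}H_\mu(\sqrt{3/7})/\mu!$ is genuinely part of the claim. Your identification of this coefficient (coming from the partitions into $1$s and $2$s only, using $a_1(d)/3=(d-1)^2$ and the leading coefficient $-14$ of $a_2(d)/3$) is correct, but ``$\sqrt{3/7}$ is not a zero of any Hermite polynomial'' is an arithmetic input that must be supplied: it follows from Schur's irreducibility theorem, which writes $H_{2m}(x)=g_m(x^2)$ and $H_{2m+1}(x)=x\,h_m(x^2)$ with $g_m,h_m$ irreducible over $\mathbb{Q}$, so that no zero of $H_\mu$ has rational square once $\deg g_m,\deg h_m\ge 2$, the remaining cases $\mu\le 3$ being checked by hand. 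You should include this. For what it is worth, the paper's own exact-degree argument is no more complete: it asserts that the unique degree-$(\mu-1)$ term of $H$ comes from $k=1$ in the first sum, overlooking the $k=0$ term $\beta_1p_{\mu-1}(r)$ of the second sum, and ruling out cancellation between these two contributions reduces to precisely the same nonvanishing statement. Your explicit computation at least makes the obstruction visible.
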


\begin{proof}
We start with the node polynomial equality $T_{r}(d) = P_{r}(a_{1}(d), \ldots, a_{r}(d))/r!$ and expand $P_{r}(a_{1}(d), \ldots, a_{r}(d))$ as $\sum_{\mu=0}^{2r}p_{r,\mu}d^{2r-\mu}.$ We wish to study the coefficients $p_{r,\mu}.$ Define, for each $\mu \in \mathbb{Z},$ the function $p_{\mu}: \mathbb{N} \rightarrow \mathbb{Q}$ by $p_{\mu}(r) = p_{r,\mu}$ if $r \geq \mu/2 \geq0,$ put $p_{\mu}(r) = 0$ in the other cases, in particular when $\mu < 0.$ We show, by induction of $\mu \geq0,$ that there exist polynomials $q_{\mu} \in \mathbb{Q}[X]_{\mu}$ such that we always have $p_{\mu}(r) = 3^{r}q_{\mu}(r)$ when $\mu \geq 0.$ The case $\mu = 0$ is settled, since $\alpha_{1} = 3$ and the leading term of $P_{r}(a_{1}(d), \ldots, a_{r}(d))$ is $\alpha_{1}^{r}d^{2r}.$ Thus we take $q_{0}$ to be the constant polynomial 1.

Recall the recursive relation
\begin{equation}
P_{r+1}(a_{1}, \ldots, a_{r+1}) = \sum_{k=0}^{r} {r \choose k} P_{r-k}(a_{1}, \ldots, a_{r-k})a_{k+1}.
\end{equation}
Expanding both sides as a polynomial in $d$ and identifying terms of degree $2(r+1)-\mu$ in $d,$ we see that the following relation must hold for all $\mu \geq 0$ and all $r+1 \geq \mu/2:$
\footnotesize
\begin{eqnarray*}
p_{\mu}(r+1) & = & \sum_{k=0}^{\lfloor \mu/2 \rfloor} {r \choose k} p_{\mu-2k}(r-k)\alpha_{k+1}k! \\
& + & \sum_{k=0}^{\lfloor (\mu-1)/2 \rfloor} {r \choose k} p_{\mu-2k-1}(r-k)\beta_{k+1}k! + \sum_{k=0}^{\lfloor \mu/2-1 \rfloor} {r \choose k} p_{\mu-2k-2}(r-k)\gamma_{k+1}k!
\end{eqnarray*}
\normalsize
using $a_{i}(d) = (i-1)!(\alpha_{i}d^{2} + \beta_{i}d + \gamma_{i}).$ We use as convention that a sum is 0 if the top index is smaller than the bottom index, and note that here we need the definition that $p_{j}(-)$ is 0 when $j < 0.$ We know that each $\alpha_{i},\beta_{i}$ and $\gamma_{i}$ is divisible by 3 (in particular $\alpha_{1} = 3$). The equality above can be written as

\begin{equation}
 p_{\mu}(r+1)-\alpha_{1}p_{\mu}(r) =  \sum_{k=1}^{\lfloor \mu/2 \rfloor} \frac{r!}{(r-k)!} p_{\mu-2k}(r-k)\alpha_{k+1} + \sum \ldots + \sum \ldots
\end{equation}

Now the right-hand side involves only $p_{j}(-)$ for $0 \leq j < \mu.$ So we may proceed to use an induction argument. Namely, putting $u_{r} := p_{\mu}(r),$ the above is an inhomogeneous linear difference equation of the form $u_{r+1}-3u_{r} = 3^{r} H(r)$ where $H \in \mathbb{Q}[X]$ is some polynomial of degree $\mu-1$ (it depends on the $a_{i}$). Indeed, by the induction hypothesis, we know that for all $s \in \{1, \ldots, \mu\},$ $p_{\mu-s}(r-k) = 3^{r}\frac{q_{\mu-s}(r-k)}{3^{k}},$ where $q_{\mu-s} \in \mathbb{Q}[X]_{\mu}$ is some polynomial of degree $\mu - s$ introduced earlier in the process. Also, $\frac{r!}{(r-k)!}$ is given as $Y_{k}(r)$ where $Y_{k}$ is the degree $k$ polynomial $X(X-1)\ldots (X-k+1).$ Factoring out the exponential $3^{r}$ for all the terms, we are left with a polynomial expression in $r$ with rational coefficients, whose degree is $\mu-1,$ because the only term of this degree appears in the first of the three sums on the right-hand side, for $k=1.$

Now, the solution to such an inhomogeneous equation is the sum of the homogeneous solution ($C \cdot 3^{r}$ for some constant $C$), and a particular solution, which will be a rational polynomial in $r$ of degree $\mu,$ multiplied by $3^{r}.$ This sum is some polynomial, which we define to be $q_{\mu} \in \mathbb{Q}[X]_{\mu},$ multiplied by $3^{r}.$ Note that $q_{\mu}$ must be of degree $\mu,$ because the leading terms of $p_{\mu}(r+1)$ and $\alpha_{1}p_{\mu}(r) = 3p_{\mu}(r)$ cancel each other, and for a lower degree than $\mu,$ we would be left with a polynomial part of degree lower than $\mu-1$ after cancellation, which is not consistent with the right-hand side.
\end{proof}

Now that we know the existence of universal polynomials $q_{\mu}$ such that $T_{r}(d)$ can be written as $\frac{3^{r}}{r!}\sum_{\mu=0}^{2r}q_{\mu}(r)d^{2r-\mu},$ it is natural to ask what these polynomials look like.

\begin{theorem}
\label{general_form}
For all $\mu \geq 0,$ the polynomial $q_{\mu}$ has the form
\begin{equation}
q_{\mu}(X) = \frac{X(X-1)\ldots (X-\lceil \mu/2 \rceil +1)}{\mu!3^{\lfloor \mu/2 \rfloor}}Q_{\mu}(X),
\end{equation}
for polynomials $Q_{\mu}$ of degree $\lfloor \mu/2 \rfloor$ with integer coefficients.
\end{theorem}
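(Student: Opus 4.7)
The plan is strong induction on $\mu$. The base case $\mu = 0$ is trivial: $q_0 \equiv 1$, and the empty product convention gives $Q_0 \equiv 1 \in \mathbb{Z}[X]$ of degree $0 = \lfloor 0/2\rfloor$.

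For the inductive step I would split the goal into (a) the vanishing of $q_\mu$ at each of $0, 1, \ldots, \lceil\mu/2\rceil - 1$, and (b) integrality of $Q_\mu := \mu! \cdot 3^{\lfloor\mu/2\rfloor} q_\mu(X)/F(X)$, with $F(X) := X(X-1)\cdots(X - \lceil\mu/2\rceil + 1)$. For (a), the key point is that $p_\mu(r) = 0$ for $r < \lceil\mu/2\rceil$, since $T_r(d)$ has no term of negative degree in $d$. The induction hypothesis on the vanishing of the $q_{\mu'}$ (for $\mu' < \mu$) promotes the identity $p_{\mu'}(r) = 3^r q_{\mu'}(r)$ from $r \geq \lceil\mu'/2\rceil$ (cf. Theorem \ref{universality}) to all $r \in \mathbb{Z}_{\geq 0}$. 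Substituting this into the recurrence for $p_\mu$ proved in Theorem \ref{universality} and dividing by $3^{r+1}$ yields a polynomial identity $q_\mu(X+1) - q_\mu(X) = h(X)$ with $h \in \mathbb{Q}[X]$ built entirely from the lower $q_{\mu'}$. For $r \in \{0, \ldots, \lceil\mu/2\rceil - 2\}$ both $p_\mu(r+1)$ and $p_\mu(r)$ vanish, so $h(r) = 0$, whence $q_\mu(0) = q_\mu(1) = \cdots = q_\mu(\lceil\mu/2\rceil - 1)$; one final evaluation at $r = \lceil\mu/2\rceil - 1$ together with the boundary identity $q_\mu(\lceil\mu/2\rceil) = p_\mu(\lceil\mu/2\rceil)/3^{\lceil\mu/2\rceil}$ pins this common value to $0$. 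Combined with $\deg q_\mu = \mu$ from Theorem \ref{universality}, this gives the factorization $q_\mu = F \cdot R_\mu$ with $\deg R_\mu = \lfloor\mu/2\rfloor$.

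For (b), I would multiply the difference equation $q_\mu(X+1) - q_\mu(X) = h(X)$ by $\mu! \cdot 3^{\lfloor\mu/2\rfloor}$ and substitute the inductive formulas $q_{\mu'}(X) = F_{\mu'}(X) Q_{\mu'}(X)/((\mu')! \cdot 3^{\lfloor\mu'/2\rfloor})$ into $h$. Three ingredients are responsible for cancelling the denominators: (i) the arithmetic identity $\lfloor(\mu - 2k)/2\rfloor + k = \lfloor\mu/2\rfloor$ controls the powers of $3$; (ii) the divisibilities $3 \mid \alpha_{k+1}, \beta_{k+1}, \gamma_{k+1}$ from Propositions \ref{division} and \ref{alpha} provide the one extra factor of $3$ needed in each summand; (iii) the integer ratios $\mu!/(\mu - 2k - j)!$ for $j \in \{0, 1, 2\}$ absorb the factorial denominators of the induction hypothesis. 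The outcome is a recurrence of the form
\begin{displaymath}
F(X+1)\, Q_\mu(X+1) - F(X)\, Q_\mu(X) = K(X) \in \mathbb{Z}[X].
\end{displaymath}
A careful coefficient-by-coefficient analysis of this recurrence, using the monic factorization of $F$ and comparing highest-degree coefficients on both sides, then yields $Q_\mu \in \mathbb{Z}[X]$ of degree exactly $\lfloor\mu/2\rfloor$.

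The principal obstacle is the integrality bookkeeping in (b): the arithmetic floor identity, the three divisibility-by-$3$ statements from Section 3, and the factorial cancellations must align exactly at every level of the induction, and the subsequent passage from an integer-coefficient difference equation to integer coefficients of $Q_\mu$ itself requires checking the leading term directly. Part (a), although essential, is much more routine once the polynomial form of the difference equation is in hand.
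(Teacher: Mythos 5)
Your route is genuinely different from the paper's: you induct on $\mu$ through the difference equation $p_{\mu}(r+1)-3p_{\mu}(r)=3^{r}H(r)$ extracted in the proof of Theorem \ref{universality}, whereas the paper expands $P_{r}(a_{1}(d),\ldots,a_{r}(d))$ explicitly as a sum over partitions $i_{1}+2i_{2}+\cdots+ri_{r}=r$ and multi-indices $(u_{j},v_{j},w_{j})$ and reads off the coefficient of $d^{2r-\mu}$ directly. Your part (a) is essentially sound: the vanishing of $q_{\mu}$ at $0,\ldots,\lceil\mu/2\rceil-1$ does follow from $p_{\mu}(r)=0$ for $r<\mu/2$, the polynomial form of the difference equation, and the inductive vanishing of the lower $q_{\mu'}$; and your bookkeeping for $K$ is correct --- the identity $\lfloor(\mu-2k)/2\rfloor=\lfloor\mu/2\rfloor-k$, the divisibility of $\alpha_{k+1},\beta_{k+1},\gamma_{k+1}$ by $3$, and the integrality of $\mu!/(\mu-2k-j)!$ do make $K:=\mu!\,3^{\lfloor\mu/2\rfloor}h$ lie in $\mathbb{Z}[X]$.

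The gap is the final step of (b). From $F(X+1)Q_{\mu}(X+1)-F(X)Q_{\mu}(X)=K(X)\in\mathbb{Z}[X]$ you cannot conclude $Q_{\mu}\in\mathbb{Z}[X]$, nor even $FQ_{\mu}\in\mathbb{Z}[X]$. A polynomial whose first difference has integer coefficients is determined only up to an arbitrary constant, and even its nonconstant part need only be integer-\emph{valued}: $G(X)=\binom{X}{2}$ satisfies $G(X+1)-G(X)=X$ yet $G\notin\mathbb{Z}[X]$; for $\mu=2$ with $F(X)=X$ this would correspond to $Q(X)=(X-1)/2$, which your recurrence cannot exclude. Comparing top coefficients as you propose only yields $\mu\cdot(\text{leading coefficient of }Q_{\mu})\in\mathbb{Z}$, and the constant term of $Q_{\mu}$ is invisible to the difference equation altogether. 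This is precisely why the paper abandons the recurrence here and instead computes the coefficient of $d^{2r-\mu}$ as the explicit double sum $\sum_{\underline{i}}\sum_{\Phi^{(r,\mu)}_{\underline{i}}}\prod_{j}\alpha_{j}^{u_{j}}\beta_{j}^{v_{j}}\gamma_{j}^{w_{j}}/(j^{i_{j}}u_{j}!v_{j}!w_{j}!)$: the divisibilities $\prod_{j}j^{i_{j}}v_{j}!w_{j}!\mid\mu!$, $\prod_{j}u_{j}!\mid(r-\lceil\mu/2\rceil)!$ and $3^{r-\lfloor\mu/2\rfloor}\mid\prod\alpha_{j}^{u_{j}}\beta_{j}^{v_{j}}\gamma_{j}^{w_{j}}$ exhibit $Q_{\mu}(r)$ as an integer linear combination of products of falling factorials in $r$, which is what actually forces integer coefficients and gives the degree bound $\lfloor\sum(v_{j}/2+w_{j})\rfloor+\sum(j-2)i_{j}\leq\lfloor\mu/2\rfloor$ with equality only for $i_{1}=r$. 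To repair your argument you would need pointwise or coefficientwise information of this kind beyond the first difference.
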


\begin{proof}
We have $T_{r}(d) = \frac{P_{r}(a_{1}(d),\ldots,a_{r}(d))}{r!}$ which we may expand as follows:
\begin{displaymath}
T_{r}(d) = \sum_{i_{1} + 2i_{2} + \ldots + ri_{r} = r} a_{1}^{i_{1}}/i_{1}! \cdot (a_{2}/2!)^{i_{2}}/i_{2}! \cdots (a_{r}/r!)^{i_{r}}/i_{r}!.
\end{displaymath}
Replacing $a_{i}(d) = (i-1)!(\alpha_{i}d^{2} + \beta_{i}d + \gamma_{i}),$ we may write
\begin{displaymath}
T_{r}(d) = \sum_{i_{1} + 2i_{2} + \ldots + ri_{r} = r} \prod_{j=1}^{r} \frac{ \Bigl( \left(\alpha_{j}d^{2} + \beta_{j}d + \gamma_{j} \right)/j \Bigr)^{i_{j}}}{i_{j}!}.
\end{displaymath}
Expanding this as a polynomial in $d$ we see that $T_{r}(d)$ is given as
\begin{displaymath}
\sum_{i_{1} + 2i_{2} + \ldots +ri_{r} = r} \frac{1}{1^{i_{1}} \ldots r^{i_{r}} i_{1}! \ldots i_{r}!} \prod_{j=1}^{r} \left(\sum_{u_{j} + v_{j} + w_{j} = i_{j}} \binom{i_{j}}{u_{j}, v_{j}, w_{j}} \alpha_{j}^{u_{j}}\beta_{j}^{v_{j}}\gamma_{j}^{w_{j}} d^{2u_{j} + v_{j}} \right).
\end{displaymath}
For any $0 \leq \mu \leq 2r,$ we identify the coefficient of $d^{2r-\mu}$ in this expression to be:
\begin{displaymath}
\sum_{i_{1} + 2i_{2} + \ldots + ri_{r} = r} \frac{1}{1^{i_{1}} \ldots r^{i_{r}} i_{1}! \ldots i_{r}!} \sum_{\Phi^{(r,\mu)}_{\underline{i}}} \prod_{j=1}^{r} \binom{i_{j}}{u_{j}, v_{j}, w_{j}} \alpha_{j}^{u_{j}}\beta_{j}^{v_{j}}\gamma_{j}^{w_{j}},
\end{displaymath}
where $\Phi^{(r,\mu)}_{\underline{i}}$ is the set of all $(3 \times r)$-tuples $(u_{j},v_{j},w_{j})_{1 \leq j \leq r}$ satisfying $u_{j}+v_{j}+w_{j} = i_{j}$ for each $j,$ and $\sum (2u_{j} + v_{j}) = 2r - \mu.$ This simplifies to
\begin{equation}
\sum_{i_{1} + 2i_{2} + \ldots + ri_{r} = r} \sum_{\Phi^{(r,\mu)}_{\underline{i}}} \prod_{j=1}^{r} \frac{\alpha_{j}^{u_{j}}\beta_{j}^{v_{j}}\gamma_{j}^{w_{j}}}{j^{i_{j}}u_{j}!v_{j}!w_{j}!}.
\end{equation}
The aim now is to provide a clearer presentation of this coefficient. We will see that it can be written as $\frac{3^{r}}{r!}q_{\mu}(r),$ where
\begin{displaymath}
q_{\mu}(r) = \frac{1}{\mu!3^{\lfloor \mu/2 \rfloor}} \frac{r!}{(r- \lceil \mu/2 \rceil)!} Q_{\mu}(r),
\end{displaymath}
$Q_{\mu}(r)$ being a polynomial with integer coefficients and degree $\lfloor \mu/2 \rfloor.$ In the following, we assume $r \geq 2$ (the conjecture is trivial for $r \in \{0,1\}$). Note that for all elements in $\Phi^{(r,\mu)}_{\underline{i}}$ we have
\begin{eqnarray*}
\mu & = & 2r - \sum_{j=1}^{r}(2u_{j}+v_{j}) \\
& = & 2\sum_{j=1}^{r}(j-1)i_{j} + \sum_{j=1}^{r}(v_{j} + 2w_{j}) \\
& \geq & 2i_{2} + 3i_{3} + \ldots + ri_{r} + \sum_{j=1}^{r} (v_{j}+w_{j}).
\end{eqnarray*}
The last inequality holds because $ r \geq 2.$ From this, $\left( \sum_{j=2}^{r} ji_{j} + \sum_{j=1}^{r} (v_{j}+w_{j}) \right)!$ divides $\mu!,$ but clearly $\prod_{j=2}^{r}(ji_{j})!\prod_{j=1}^{r}v_{j}!w_{j}!$ divides the first expression, hence it divides $\mu!$ as well. Since $j^{i_{j}}$ clearly divides $(ji_{j})!$ we get that $\prod_{j=1}^{r}j^{i_{j}}v_{j}!w_{j}!$ divides $\mu!$.

Secondly, we have $r - \lceil \mu/2 \rceil = \lfloor \sum (u_{j} + v_{j}/2) \rfloor \geq \sum u_{j},$ therefore $\prod_{j=1}^{r}u_{j}!$ divides $(r - \lceil \mu/2 \rceil)!.$ We have shown that the numbers $\alpha_{j}, \beta_{j}$ and $\gamma_{j}$ are all divisible by 3 (cf. Lemma \ref{alpha}). Therefore $\prod \alpha_{j}^{u_{j}}\beta_{j}^{v_{j}}\gamma_{j}^{w_{j}}$ is divisible by $3^{\sum i_{j}}$ (recall that $u_{j}+v_{j}+w_{j} = i_{j}$). But $r - \lfloor \mu/2 \rfloor = \lceil r - \mu/2 \rceil = \lceil \sum u_{j} + v_{j}/2 \rceil \leq \sum i_{j},$ so $3^{r-\lfloor \mu/2 \rfloor}$ divides $\prod \alpha_{j}^{u_{j}}\beta_{j}^{v_{j}}\gamma_{j}^{w_{j}}.$ In conclusion, we may write the coefficient of $d^{2r-\mu}$ in $T_{r}(d)$ as
\begin{equation}
\frac{3^{r-\lfloor \mu/2 \rfloor}}{r!} \cdot \frac{r!}{\mu! (r-\lceil \mu/2 \rceil)!} Q_{\mu}(r)
\end{equation}
for some integer $Q_{\mu}(r).$ It remains to show that this is a universal integral polynomial in $r,$ of degree $\lfloor \mu/2 \rfloor.$ The universality is simply a consequence of Theorem \ref{universality}. To show that we have a polynomial with integer coefficients, we start by giving the explicit expression of $Q_{\mu}(r):$
\begin{equation}
Q_{\mu}(r) = \frac{\mu!(r-\lceil \mu/2 \rceil)!}{3^{r-\lfloor \mu/2 \rfloor}} \sum_{\substack{\sum j_{i_{j}}=r\\\Phi^{(r,\mu)}_{\underline{i}}}}\prod_{j=1}^{r} \frac{\alpha_{j}^{u_{j}}\beta_{j}^{v_{j}}\gamma_{j}^{w_{j}}}{j^{i_{j}}u_{j}!v_{j}!w_{j}!}.
\end{equation}
The idea is to consider in more detail the inequalities used above to extract the first factor. Whenever $A$ is an integral multiple of $B,$ we use the notation $A \unrhd B.$ First, we have seen that
\begin{displaymath}
\frac{(r-\lceil \mu/2 \rceil)!}{\prod u_{j}!} \unrhd \frac{(r-\lceil \mu/2 \rceil)!}{(\sum u_{j})!}  =  \frac{(r-\lceil \mu/2 \rceil)!}{(r-\lceil \mu/2 \rceil - \lfloor \sum v_{j}/2 \rfloor)!},
\end{displaymath}
which is a polynomial in $r$ of degree $\lfloor \sum v_{j}/2 \rfloor.$ Secondly, we have
\begin{displaymath}
\frac{\mu!}{\prod j^{i_{j}}v_{j}!w_{j}!} \unrhd \frac{(2r-\sum (2u_{j}+v_{j}))!}{\left(\sum_{j=2}^{r}ji_{j} + \sum_{j=1}^{r}(v_{j}+w_{j})\right)!}.
\end{displaymath}
Here, notice that $(2r-\sum (2u_{j}+v_{j})) - \left(\sum_{j=2}^{r}ji_{j} + \sum_{j=1}^{r}(v_{j}+w_{j})\right)$ is equal to the sum $S := \sum_{j=2}^{r}(j-2)i_{j} + \sum_{j=1}^{r}w_{j},$ hence the right-hand side fraction above is a polynomial in $r$ of degree $S$. In total, we have a polynomial in $r$ of degree
\begin{equation}
\left\lfloor \sum_{j=1}^{r} \left(\frac{v_{j}}{2} + w_{j} \right) \right\rfloor + \sum_{j=2}^{r}(j-2)i_{j} \leq \lfloor \mu/2 \rfloor.
\end{equation}
We get equality in exactly one case, namely when $i_{1} = r.$ Indeed, then all other $i_{j} = 0$ and we have the equalities $u_{1} + v_{1} + w_{1} = r$ and $2u_{1} + v_{1} = 2r-\mu.$ From this we get $2(r-v_{1}-w_{1}) + v_{1} = 2r-\mu,$ which yields $\mu = v_{1} + 2w_{1}.$

In conclusion, the remaining term $Q_{\mu}(r)$ (which we get by summing integral multiples of the above terms over all partitions $\underline{i}$ of $r$ and all $\Phi^{(r,\mu)}_{\underline{i}}$) is an integer polynomial in $r$ of degree $\lfloor \mu/2 \rfloor.$
\end{proof}

\begin{remark}
The part of the proof extracting the coefficient $\frac{3^{r-\lfloor \mu/2\rfloor}}{\mu!(r-\lceil\mu/2\rceil)!}$ was outlined by Kleiman and Piene in \cite{KP3}. In \cite[Theorem 1.1]{Choi}, Choi uses the degeneration method due to Ran (see \cite{Ran}) to show that $T_{r}(d)$ is given by (in his notation)
\begin{displaymath}
T_{r}(d) = \sum_{\mu=0}^{2r}a^{r}_{2r-\mu} \frac{3^{r}d^{2r-\mu}}{r!},
\end{displaymath}
where $a^{r}_{2r-\mu} = r(r-1)\ldots (r-\lfloor (\mu-1)/2 \rfloor)(b^{(\mu)}_{\lfloor \mu/2 \rfloor}r^{\lfloor \mu/2\rfloor} + \ldots + b^{(\mu)}_{1}r+b^{(\mu)}_{0}),$ with $b^{(\mu)}_{i} \in \mathbb{Q}.$ Our Theorem \ref{general_form} is consistent with this result, and also refines it by providing a (multiple of the) common denominator of the coefficients $b^{(\mu)}_{i}.$ It is also consistent with the calculations of $T_{r}(d)$ for $r \leq 14$ done by Block in \cite{Blo} (cf. his Theorem 1.2 and Appendix A).
\end{remark}


\begin{thebibliography}{9}
\bibitem{Blo} \textsc{F. Block,} Computing node polynomials for plane curves, Math. Research Letters 18, 4 (2011), 621--643.
\bibitem{BL1} \textsc{J. Bryan and N. Leung,} Generating functions for the number of curves on abelian surfaces, Duke Math. J. 99 (1999), No. 2, 311--328.
\bibitem{BL2} \textsc{J. Bryan and N. Leung,} The enumerative geometry of K3 surfaces and modular forms, J. Amer. Math. Soc. 13 (2000), No. 2, 371--410 (electronic).
\bibitem{CH} \textsc{L. Caporaso and J. Harris,} Counting plane curves of any genus, Invent. Math. 131 (1998), No. 2, 345--392.
\bibitem{Cay} \textsc{A. Cayley,} On the theory of involution, \textit{Trans. Cambridge Phil. Soc.,} \textbf{XI. Part I} (1866), 21-38. = ``Coll. Math. Papers of A. Cayley,'' \textbf{V}, [348], 1892, 295--312.
\bibitem{Choi} \textsc{Y. Choi,} On the degree of a Severi variety, Private communication.
\bibitem{DI} \textsc{P. Di Francesco and C. Itzykson,} Quantum intersection rings, ``The moduli space of curves,'' Texel Island, 1994, 81--148, Progr. Math., 129, Birkh\"auser Boston, Boston, MA, 1995.
\bibitem{FM} \textsc{S. Fomin and G. Mikhalkin,} Labeled floor diagrams for plane curves, J. Euro. Math. Soc., Vol. 12, Issue 6 (2010), 1453--1496.
\bibitem{Ful} \textsc{W. Fulton,} On Nodal Curves, Algebraic geometry --- open problems (Ravello, 1982), 146--155, Lecture Notes in Math., 997, Springer, Berlin, 1983.
\bibitem{Got} \textsc{L. G\"ottsche,} A Conjectural Generating Function for Numbers of Curves on Surfaces, Comm. Math. Phys. 196 (1998), No. 3, 523--533.
\bibitem{KP1} \textsc{S. Kleiman and R. Piene,} Enumerating singular curves on surfaces, in ``Algebraic geometry --- Hirzebruch 70'' (Warsaw, 1998), 209--238, Contemp. Math., 241, Amer. Math. Soc., Providence, RI, 1999.
\bibitem{KP2} \textsc{S. Kleiman and R. Piene,} Node polynomials for families: methods and applications, Math. Nachr. 271  (2004), 69--90.
\bibitem{KP3} \textsc{S. Kleiman and R. Piene,} Unpublished notes, 2002.
\bibitem{KST} \textsc{M. Kool, V. Shende, R. P. Thomas,} A Short Proof of the G\"ottsche Conjecture, Geometry \& Topology 15 (2011), 397--406.
\bibitem{Qvi} \textsc{N. Qviller,} Structure of node polynomials for curves on surfaces, arXiv:math.AG/1102.2092.
\bibitem{Ran} \textsc{Z. Ran,} Enumerative geometry of singular plane curves, Invent. Math. 97 (1989), p. 447--465.
\bibitem{Rob} \textsc{S. Roberts,} On a simplified method of obtaining the order of algebraical conditions, Proc. Lond. Math. Soc. (1875), 101--113.
\bibitem{Sal} \textsc{G. Salmon,} ``A treatise on the analytic geometry of three dimensions,'' 2nd edition, Hodges, Smith, and Co., Dublin 1865.
\bibitem{Tzeng} \textsc{Y. Tzeng,} A Proof of the G\"ottsche--Yau--Zaslow Formula, arXiv:math.AG/1009.5371.
\bibitem{Hol} \textsc{R. E. von Holdt,} Rational Powers of Power Series, The American Mathematical Monthly, Vol. 72, No. 7 (Aug. -- Sep., 1965), 740--743.
\end{thebibliography}
\end{document}